\theoremstyle{plain}
\newtheorem{theorem}{Theorem}
\newtheorem{lemma}[theorem]{Lemma}
\newtheorem{corollary}[theorem]{Corollary}
\newtheorem{proposition}[theorem]{Proposition}
\theoremstyle{definition}
\newtheorem{example}[theorem]{Example}
\newtheorem{remark}[theorem]{Remark} 
\numberwithin{equation}{section}
\newcommand{\R}{\mathbb R}
\newcommand{\N}{\mathbb N}
\newcommand{\rep}{\mathrm{rep}}
\newcommand{\val}{\mathrm{val}}
\newcommand{\lex}{\mathrm{lex}}
\newcommand{\Fac}{\mathrm{Fac}}
\newcommand{\Pref}{\mathrm{Pref}}
\newcommand{\B}{\boldsymbol{\beta}}
\newcommand{\floor}[1]{\left\lfloor#1\right\rfloor}
\newcommand{\ceil}[1]{\left\lceil#1\right\rceil}
\newcommand{\DB}{d_{\boldsymbol{\beta}}}
\newcommand{\DA}{d_{\boldsymbol{\alpha}}}
\newcommand{\DBi}[1]{d_{{\B}^{(#1)}}}
\newcommand{\qDB}{d_{\boldsymbol{\beta}}^{*}}
\newcommand{\qDBi}[1]{d_{\boldsymbol{\beta}^{(#1)}}^{*}}
\newcommand{\Int}{[\![0,p-1]\!]}
\DeclareMathOperator{\Card}{Card}
\newcommand{\LB}{\ell_{\boldsymbol{\beta}}}
\newcommand{\LBi}[1]{\ell_{{\B}^{(#1)}}}
\newcommand{\qLB}{\ell_{\boldsymbol{\beta}}^{*}}
\newcommand{\qLBi}[1]{\ell_{\boldsymbol{\beta}^{(#1)}}^{*}}
\newcommand{\LA}{\ell_{\boldsymbol{\alpha}}}
\newcommand\restr[2]{{
  \left.\kern-\nulldelimiterspace 
  #1 
  \vphantom{\big|} 
  \right|_{#2} 
  }}
\title{Combinatorial properties of lazy expansions in Cantor real bases}
\author{Célia Cisternino}
\begin{document}

\begin{abstract}
The lazy algorithm for a real base $\beta$  is generalized to the setting of Cantor bases $\B=(\beta_n)_{n\in\N}$ introduced recently by Charlier and the author. 
To do so, let $x_{\B}$ be the greatest real number that has a $\B$-representation $a_0a_1a_2\cdots$ such that each letter $a_n$ belongs to $\{0,\ldots,\ceil{\beta_n}-1\}$. 
This paper is concerned with the combinatorial properties of the lazy $\B$-expansions, which are defined when $x_{\B}<+\infty$. 
As an illustration, Cantor bases following the Thue-Morse sequence are studied and a formula giving their corresponding value of $x_{\B}$ is proved.
First, it is shown that the lazy $\B$-expansions are obtained by ``flipping'' the digits of the greedy $\B$-expansions. 
Next, a Parry-like criterion characterizing the sequences of non-negative integers that are the lazy $\B$-expansions of some real number in $(x_{\B}-1,x_{\B}]$ is proved. 
Moreover, the lazy $\B$-shift is studied and in the particular case of alternate bases, that is the periodic Cantor bases, an analogue of Bertrand-Mathis' theorem in the lazy framework is proved: the lazy $\B$-shift is sofic if and only if all quasi-lazy $\B^{(i)}$-expansions  of $x_{\B^{(i)}}-1$ are ultimately periodic, where $\B^{(i)}$ is the $i$-th shift of the alternate base $\B$.
\end{abstract}

\maketitle

\bigskip
\hrule
\bigskip

\noindent 2010 {\it Mathematics Subject Classification}: 11A63, 11K16, 37B10, 68Q45

\noindent \emph{Keywords: 
Expansions of real numbers,
Cantor bases,
Alternate bases,
Greedy algorithm, 
Lazy algorithm,
Parry's theorem,
Sofic subshift
}

\bigskip
\hrule
\bigskip


\section{Introduction}

Two well-known generalizations of the integer base representations are the Cantor and real base representations. The former was introduced by  Cantor in 1869~\cite{Cantor1869}. The Cantor representation of a real number $x$ via a base sequence $(b_n)_{n\in \N}\in (\N_{\ge2})^{\N}$ is an infinite sequence $a_0a_1a_2\cdots$ of non-negative integers such that
\[
x=\sum_{n\in\N}\frac{a_n}{\prod_{i=0}^n b_i}.
\]
The latter was defined by Rényi in 1957~\cite{Renyi1957} and well understood since the pioneering work of Parry in 1960~\cite{Parry1960}. A real base representation of a real number $x$ via a real base $\beta>1$ is an infinite sequence $a_0a_1a_2\cdots$ of non-negative integers such that 
\[
x=\sum_{n\in\N}\frac{a_n}{\beta^{i+1}}.
\]

Gathering both, the notion of Cantor real bases was introduced by Charlier and the author in a recent work~\cite{CharlierCisternino2021}. 
Note that these type of representations involving more than one base simultaneously and independently aroused the interest of mathematicians~\cite{CaalimaDemegillo2020, CharlierCisternino2021, 
Li2021, Neunhauserer2021, 
ZouKomornikLu2021}. 

A \emph{Cantor real base} is a sequence $\B=(\beta_n)_{n\in\N}$ of real numbers greater than $1$  such that $\prod_{n\in\N}\beta_n=+\infty$.
A representation of a real number $x$ via a Cantor real base $\B=(\beta_n)_{n\in\N}$ is an infinite sequence $a_0a_1a_2\cdots$ over $\N$ such that
\[
x=\sum_{n\in\N}\frac{a_n}{\prod_{i=0}^n \beta_i}.
\]
The digits of a $\B$-representation can be chosen by using several appropriate algorithms. As in the real base theory, in order to represent non-negative real numbers smaller than or equal to $x_{\B}$ where \[x_{\B}=\sum_{n\in\N}\frac{\ceil{\beta_n}-1}{\prod_{i=0}^n\beta_i},\] the most commonly used algorithms are the greedy and the lazy ones. 
In the greedy algorithm, each digit is chosen as the largest possible among $0,\ldots,\ceil{\beta_n}-1$ at position $n$. At the other extreme, the lazy algorithm picks the least possible digit at each step. The so-obtained $\B$-representations are respectively called the greedy and lazy $\B$-expansions. 

In the initial work~\cite{CharlierCisternino2021}, the combinatorial properties of the greedy $\B$-expansions of real numbers in $[0,1)$ were investigated. In particular, generalizations of several combinatorial results of real base expansions were obtained, such as Parry's criterion for greedy expansions and, while considering periodic Cantor real bases, called \emph{alternate bases}, Bertrand-Mathis' characterization of sofic shifts. 
Next, in~\cite{CharlierCisterninoDajani2021}, in the particular case of alternate bases, the lazy expansions were defined and both greedy and lazy expansions were studied in terms of dynamics. These results generalize the well-known ones from the theory of real base expansions (see~\cite{DajaniKraaikamp2002,DajaniKraaikamp2002-2,Parry1960,Renyi1957}). Note that the lazy real base expansions have been widely studied in terms of dynamics and, to the best of the author’s knowledge, not in terms of combinatorics. 

The goal of this paper is to study the combinatorial properties of the lazy expansions in Cantor real bases. 
In particular, the aim is to obtain a version of Parry's theorem~\cite{Parry1960} and Bertrand-Mathis' theorem~\cite{Bertrand-Mathis1989} in the lazy Cantor real base framework.

This paper is organized as follows. 
First, the Cantor bases and the associated greedy and lazy algorithms are introduced in Section~\ref{Sec : CantorRealBases}. Note that the lazy algorithm is defined when $x_{\B}<+\infty$ hence, this paper deals with Cantor bases such that $x_{\B}<+\infty$. As an illustration, in Section~\ref{Sec : CantorRealBases}, the value of $x_{\B}$ is studied when $\B$ is a Cantor base defined thanks to the Thue-Morse sequence over an alphabet $\{\alpha,\beta\}$ with $\alpha,\beta>1$, that is $\B=(\alpha,\beta,\beta,\alpha,\beta,\alpha,\alpha,\beta,\cdots)$.
Next, it is shown in Section~\ref{Sec : Flip} that the lazy $\B$-expansions are obtained by ``flipping'' the digits of the greedy $\B$-expansions. 
This allow us, to translate the greedy properties from~\cite{CharlierCisternino2021}  to their lazy analogues. 
Section~\ref{Sec : FirstPropertiesLazy} is then concerned by first few properties of lazy $\B$-expansions. 
Then, we define the quasi-lazy $\B$-expansions of $x_{\B}-1$ in Section~\ref{Sec :  QuasiGreedyLazy} and show that the same ``flip'' permits us to go from the quasi-greedy $\B$-expansion 
to the quasi-lazy one. 
Hence, in Section~\ref{Sec : Admissible}, the lazy $\B$-admissible sequences are studied and a Parry-like criterion characterizing the lazy $\B$-expansions is proved. Finally, in Section~\ref{Sec : LazyShift}, the lazy $\B$-shift is studied and in the particular case of alternate bases, an analogue of Bertrand-Mathis' theorem in the lazy case is proved. That is, if $\B$ is an alternate base, we obtain that the lazy $\B$-shift is sofic if and only if all quasi-lazy $\B^{(i)}$-expansions  of $x_{\B^{(i)}}-1$ are ultimately periodic, where $\B^{(i)}$ is the $i$-th shift of the alternate base $\B$.

\section{Cantor real bases}
\label{Sec : CantorRealBases}

In this section, the needed definitions and conventions are given. Throughout this text, if $a$ is an infinite word then for all $n\in\N$, $a_n$ designates its letter indexed by $n$, so that $a=a_0a_1a_2\cdots$, that is the $(n+1)^{\text{st}}$ letter of $a$. Moreover, an interval of non-negative integers $\{i, \ldots , j\}$ with\footnote{If $i>j$, we take the convention that $[\![i,j]\!]$ is the empty set.} $i\le j$ is denoted $[\![i,j]\!]$ and $\floor{\cdot}$ and $\ceil{\cdot}$ respectively denote the floor and ceiling functions.\\

A \emph{Cantor real base}, or simply a \emph{Cantor base}, is a sequence $\B=(\beta_n)_{n\in\N}$ of real numbers greater than $1$  such that $\prod_{n\in\N}\beta_n=+\infty$.
For instance, any sequence $\B=(\beta_n)_{n\in\N}$ of real numbers greater than $1$ that takes only finitely many values is a Cantor base since in this case, the condition $\prod_{n\in\N}\beta_n=+\infty$ is trivially satisfied. 
In particular, if $\beta>1$ then $\B=(\beta,\beta,\ldots)$ is a Cantor base and in this case, all notions coincides with the widely studied theory of $\beta$-expansions. 
An \emph{alternate base} is a periodic Cantor base, that is a Cantor base for which there exists $p\in\N_{\ge 1}$ such that for all $n\in\N$, $\beta_n=\beta_{n+p}$. In this case, we simply write $\B=(\overline{\beta_0,\ldots,\beta_{p-1}})$ and the integer $p$ is called the \emph{length} of the alternate base $\B$. 

Let $\B$ be a Cantor base. We define
\[
	\B^{(n)}=(\beta_n,\beta_{n+1},\ldots)\quad \text{for all }n\in\N.
\]
In particular $\B^{(0)}=\B$.
The \emph{$\B$-value (partial) map} $\val_{\B}\colon (\R_{\ge 0})^\N\to \R_{\ge 0}$ by
\begin{equation}
\label{eq:representationCantor}
\val_{\B}(a)=\sum_{n\in\N}\frac{a_n}{\prod_{i=0}^n\beta_i}
\end{equation}
for any infinite word $a$ over $\R_{\ge 0}$, provided that the series converges. A \emph{$\B$-representation} of a non-negative real number $x$ is an infinite word $a\in\N^\N$ such that $\val_{\B}(a)=x$.
A $\B$-representation is said to be \emph{finite} if it ends with infinitely many zeros, and \emph{infinite} otherwise. 
The \emph{length} of a finite $\B$-representation is the length of the longest prefix ending in a non-zero digit. 
When a $\B$-representation is finite, we sometimes omit to write the tail of zeros.

\subsection{Greedy algorithm on $\boldsymbol{[0,1)}$}\label{Sec : Greedy}

For $x\in [0,1)$, a distinguished $\B$-representation \linebreak$\varepsilon_{\B,0}(x)\varepsilon_{\B,1}(x)\varepsilon_{\B,2}(x)\cdots$, called the \emph{greedy $\B$-expansion} is obtained from the \emph{greedy algorithm}. If the first $N$ digits of the greedy $\B$-expansion of $x$ are given by $\varepsilon_{\B,0}(x),\ldots,\varepsilon_{\B,N-1}(x)$, then the next digit $\varepsilon_{\B,N}(x)$ is the greatest integer such that 
\[
	\sum_{n=0}^N	\frac{\varepsilon_{\B,n}(x)}{\prod_{k=0}^n\beta_k}\le x.
\]
In particular, for all $n\in \N$, the digit $\varepsilon_{\B,n}(x)$ belongs to the alphabet $[\![0,\ceil{\beta_n}-1]\!]$.
The greedy algorithm can be equivalently defined as follows:
\begin{itemize}
\item $\varepsilon_{\B,0}(x)=\floor{\beta_0 x}$ and $r_{\B,0}(x)=\beta_0 x-\varepsilon_{\B,0}(x)$
\item $\varepsilon_{\B,n}(x)=\floor{\beta_n r_{\B,n-1}(x)}$ and $r_{\B,n}(x)=\beta_n r_{\B,n-1}(x)-\varepsilon_{\B,n}(x)$ for $n\in\N_{\ge 1}$.
\end{itemize}
The obtained $\B$-representation is denoted by $\DB(x)$ and is called the \emph{greedy $\B$-expansion} of $x$. When the context is clear, we simply denote $\varepsilon_{\B,n}(x)$ by $\varepsilon_{n}(x)$ and $r_{\B,n}(x)$ by $r_{n}(x)$.

\begin{example}\label{Ex : AlternateBase}
Consider the alternate base $\B=(\overline{\frac{1+ \sqrt{13}}{2},\frac{5+ \sqrt{13}}{6}})$ already studied in~\cite{CharlierCisternino2021} and~\cite{CharlierCisterninoDajani2021}. We have $d_{\B}(\frac{-5+2\sqrt{13}}{3})=11$ and $d_{\B}(\frac{2+\sqrt{13}}{9})=(10)^\omega$ where the $\omega$ notation means an infinite repetition.
\end{example}

\begin{example}\label{Ex : GreedyTM}
Let $\alpha=\frac{1+ \sqrt{13}}{2}$ and $\beta=\frac{5+ \sqrt{13}}{6}$ and consider the Cantor base $\B=(\beta_n)_{n\in\N}$ from~\cite{CharlierCisternino2021} defined by 
\begin{align}
\label{Eq : CantorTM}
\beta_n=\begin{cases} 
\alpha & \text{if } | \rep_2(n) |_1 \equiv 0 \pmod 2\\
\beta & \text{otherwise}
\end{cases}
\end{align}
for all $n\in \N$, where $\rep_2$ is the function mapping any non-negative integer to its $2$-expansion and $|u|_1$ is the number of occurrences of the letter $1$ in the word $u$. We get $\B=(\alpha,\beta,\beta,\alpha,\beta,\alpha,\alpha,\beta, \ldots)$ where the infinite word $\beta_0\beta_1\beta_2\cdots$ is the Thue-Morse word over the alphabet $\{\alpha,\beta\}$. 
The greedy $\B$-expansion of $\frac{1}{2}$ has $10001$ as a prefix and $d_{\B}(\frac{65-18\sqrt{13}}{6})=1002$.
\end{example}

\subsection{Lazy algorithm on $\boldsymbol{(x_{\B}-1,x_{\B}]}$}\label{Sec : Lazy}

Considering a Cantor base $\B$, define
\begin{equation*}
\label{Eq : x_B}
	x_{\B}=\sum_{n\in\N}\frac{\ceil{\beta_n}-1}{\prod_{k=0}^n\beta_k}.
\end{equation*}
Either this series converges or $x_{\B}=+\infty$.
In both cases, this corresponds to the greatest real number that has a $\B$-representation $a_0a_1a_2\cdots$ such that for all $n\in \N$ the letter $a_n$ belongs to the alphabet $[\![0,\ceil{\beta_n}-1]\!]$. 

Since the greedy algorithm converges on $[0,1)$, it can be easily seen that $x_{\B}\ge 1$. Moreover, for all $n\in\N$, 
\begin{equation}
\label{Eq : EqualitiesXB}
	x_{\B^{(n)}}=\frac{x_{\B^{(n+1)}}+\ceil{\beta_n}-1}{\beta_n}.
\end{equation}
Hence, it can be easily proved that $x_{\B}=1$ if and only if $\beta_n\in \N_{\ge2}$ for all $n\in \N$.

\begin{example}\label{Ex : XBAlternateBase}
Consider the alternate base $\B=(\overline{\frac{1+ \sqrt{13}}{2},\frac{5+ \sqrt{13}}{6}})$ from Example~\ref{Ex : AlternateBase}. We get $x_{\B}=\frac{5+7\sqrt{13}}{18}\simeq 1.67$ and $x_{\B^{(1)}}=\frac{2+\sqrt{13}}{3}\simeq 1.86$.
\end{example}

\begin{example}\label{Ex : XBTM}
Let $\alpha,\beta>1$ and let $\B=(\alpha,\beta,\beta,\alpha,\beta,\alpha,\alpha,\beta, \ldots)$ be the Thue-Morse Cantor base on $\{\alpha,\beta\}$ defined as~\eqref{Eq : CantorTM}. 
For all $n\ge 1 $, let 
\[
x_n=\sum_{m=0}^{2^n-1}\frac{\ceil{\beta_m}-1}{\prod_{k=0}^{m}\beta_k}.
\]
We get $x_{\B}=\lim_{n\to +\infty}x_n$.
Similarly, let $\overline{\B}$ denote the Cantor base $\overline{\B}=(\overline{\beta_n})_{n\in\N}$ where $\overline{\alpha}=\beta$ and $\overline{\beta}=\alpha$.
We get $\overline{\B}=(\beta,\alpha,\alpha,\beta,\alpha,\beta,\beta,\alpha, \ldots)$. For all $n\ge 1$, denote 
\[
y_n=\sum_{m=0}^{2^n-1}\frac{\big\lceil\ \overline{\beta_m}\ \big\rceil-1}{\prod_{k=0}^{m}\overline{\beta_k}}.
\]
By definition of the Thue-Morse sequence, for all $n\in \N$ we have 
\[
(\beta_{2^n},\beta_{2^n+1},\ldots, \beta_{2^{n+1}-1})=(\overline{\beta_0}, \overline{\beta_1},\ldots,\overline{\beta_{2^n-1}}).
\]
Moreover, for all $n\ge 1$ the sequence $(\beta_0,\ldots,\beta_{2^n-1})$ has the same number of $\alpha$ and $\beta$. We get $\prod_{k=0}^{2^n-1}\beta_k=(\alpha\beta)^{2^{n-1}}$.
Hence, we have
\begin{align*}
\begin{cases}
&x_1=\frac{\ceil{\alpha}-1}{\alpha}+\frac{\ceil{\beta}-1}{\alpha\beta}, \\
& y_1=\frac{\ceil{\beta}-1}{\beta}+\frac{\ceil{\alpha}-1}{\beta\alpha},\\
& x_{n+1}=x_n+\frac{1}{(\alpha\beta)^{2^{n-1}}} y_n, \quad \forall n\ge 1\\
& y_{n+1}=y_n+\frac{1}{(\alpha\beta)^{2^{n-1}}} x_n, \quad \forall n\ge 1.
\end{cases}
\end{align*}
That is, for all $n \ge 1$, we have 
\[
v_{n+1}=A_nv_n 
\]
where,
\[v_n=\begin{pmatrix}
x_n\\
y_n
\end{pmatrix} \quad \text{ and } \quad A_n=\begin{pmatrix}
1 & \tfrac{1}{(\alpha\beta)^{2^{n-1}}}\\
\tfrac{1}{(\alpha\beta)^{2^{n-1}}} & 1
\end{pmatrix}.
\]
For all $n\ge 1$, the eigenvalues of the matrix $A_n $ are $1+\tfrac{1}{(\alpha\beta)^{2^{n-1}}}$ and $1-\tfrac{1}{(\alpha\beta)^{2^{n-1}}}$ of eigenvectors $\begin{pmatrix}
1\\
1
\end{pmatrix}$ and $\begin{pmatrix}
1\\
-1
\end{pmatrix}$
respectively.
Moreover, we have 
\[
v_1= 
\tfrac{x_1+y_1}{2} \begin{pmatrix}
1\\
1
\end{pmatrix}
+ 
\tfrac{x_1-y_1}{2} 
\begin{pmatrix}
1\\
-1
\end{pmatrix}.
\]
We obtain
\begin{align*}
v_{n+1}&=A_nA_{n-1}\cdots A_1v_1\\
&= \tfrac{x_1+y_1}{2}A_nA_{n-1}\cdots A_1\begin{pmatrix}
1\\
1
\end{pmatrix} + \tfrac{x_1-y_1}{2} A_nA_{n-1}\cdots A_1 \begin{pmatrix}
1\\
-1
\end{pmatrix}\\ 
&= \tfrac{x_1+y_1}{2}\prod_{k=1}^n \big(1+\tfrac{1}{(\alpha\beta)^{2^{k-1}}}\big)\begin{pmatrix}
1\\
1
\end{pmatrix} + \tfrac{x_1-y_1}{2} \prod_{k=1}^n \big(1-\tfrac{1}{(\alpha\beta)^{2^{k-1}}}\big) \begin{pmatrix}
1\\
-1
\end{pmatrix}. 
\end{align*}
Then, the value of $x_{\B}$ can be computed by
\begin{align*}
x_{\B}=\lim_{n\to +\infty}x_n=\tfrac{x_1+y_1}{2}\prod_{k\in \N_{\ge1}} \big(1+\tfrac{1}{(\alpha\beta)^{2^{n-1}}}\big) + \tfrac{x_1-y_1}{2} \prod_{k\in \N_{\ge1}} \big(1-\tfrac{1}{(\alpha\beta)^{2^{n-1}}}\big).
\end{align*}
We now study the two infinite products in the above formula. We have 
\begin{align*}
&\Big(\prod_{k\in \N_{\ge1}} \big(1+\tfrac{1}{(\alpha\beta)^{2^{k-1}}}\big)\Big)
\Big(\prod_{k\in \N_{\ge1}} \big(1-\tfrac{1}{(\alpha\beta)^{2^{k-1}}}\big)\Big)
\\
= & \prod_{k\in \N_{\ge1}}^ \Big(\big(1+\tfrac{1}{(\alpha\beta)^{2^{k-1}}}\big) \big(1-\tfrac{1}{(\alpha\beta)^{2^{k-1}}}\big)\Big)\\
=& \prod_{k=2}^\infty \big(1-\tfrac{1}{(\alpha\beta)^{2^{k-1}}}\big).
\end{align*}
Hence, we get
\[
\prod_{k\in \N_{\ge1}} \big(1+\tfrac{1}{(\alpha\beta)^{2^{k-1}}}\big)= \frac{1}{1-\frac{1}{\alpha\beta}}.
\]
Moreover, consider the function $f$ defined by $f(z)=\sum_{m\in \N} (-1)^{t_m}z^m$ where $t_0t_1t_2\cdots$ is the Thue-Morse sequence over the alphabet $\{0,1\}$. 
By the infinite product definition of the Thue-Morse sequence, we get
\begin{align*}
\prod_{k\in \N_{\ge1}} \big(1-\tfrac{1}{(\alpha\beta)^{2^{k-1}}}\big)=f(\tfrac{1}{\alpha\beta}).
\end{align*}
Then, the value of $x_{\B}$ can be computed by
\begin{align*}
x_{\B}=
\tfrac{x_1+y_1}{2}\big(\frac{1}{1-\frac{1}{\alpha\beta}}\big)+ \tfrac{x_1-y_1}{2} f(\tfrac{1}{\alpha\beta}).
\end{align*}
In particular, by considering the Cantor base from Example~\ref{Ex : GreedyTM}, a computer approximation of $f(\tfrac{1}{\alpha\beta})$ gives $0.627941$. Hence, we get $x_{\B}\simeq 1.73295$.
\end{example}

\begin{example}
Consider the Cantor base $\B=(1+\frac{1}{n+1})_{n\in \N}$. For all $n\in \N$, we have $\beta_n=\frac{n+2}{n+1}$ so we get
\[
x_{\B}=\sum_{n\in\N}\frac{1}{\prod_{k=0}^n\frac{k+2}{k+1}}=\sum_{n\in\N}\frac{1}{n+2}=+\infty.
\]
\end{example}

As said in~\cite[Section 3]{CharlierCisterninoDajani2021}, if $x_{\B}<+\infty$, the other extreme $\B$-expansions of real number, namely the \emph{lazy $\B$-expansions}, is defined.
Hence, from now on, consider a Cantor base $\B=(\beta_n)_{n\in \N}$ such that $x_{\B}<+\infty$. For instance, any Cantor base $\B$ that takes only finitely many values has finite corresponding $x_{\B}$. 

In the greedy algorithm, each digit is chosen as the largest possible at the considered position. 
On the contrary, in the lazy algorithm, each digit is chosen as the least possible at each step. 
The \emph{lazy algorithm} is defined as follows: for $x\in (x_{\B}-1,x_{\B}]$, if the first $N$ digits of the lazy $\B$-expansion of $x$ are given by $\xi_{\B,0}(x),\ldots,\xi_{\B,N-1}(x)$, then the next digit $\xi_{\B,N}(x)$ is the least element in $[\![0,\ceil{\beta_N}-1]\!]$ such that 
\[
	\sum_{n=0}^N\frac{\xi_{\B,n}(x)}{\prod_{k=0}^n\beta_k}
	+\sum_{n=N+1}^{+\infty}\frac{\ceil{\beta_n}-1}{\prod_{k=0}^n\beta_k}
	\ge x.
\]
The lazy algorithm can be equivalently defined as follows:
\begin{itemize}
\item $\xi_{\B,0}(x)=\lceil\beta_0 x- x_{\B^{(1)}}\rceil$ and $s_{\B,0}(x)=\beta_0 x-\xi_{\B,0}(x)$
\item $\xi_{\B,n}(x)=\lceil\beta_n s_{\B,n-1}(x)-x_{\B^{(n+1)}}\rceil$ and $s_{\B,n}(x)=\beta_n s_{\B,n-1}(x)-\xi_{\B,n}(x)$ for $n\in\N_{\ge 1}$.
\end{itemize}
The obtained $\B$-representation of $x\in (x_{\B}-1,x_{\B}]$ is denoted by $\LB(x)$ and is called the \emph{lazy $\B$-expansion} of $x$. As before, if the context is clear, the indexes $\B$ in the writings $\xi_{\B,n}(x)$ and $s_{\B,n}(x)$ are omitted.

\begin{example}\label{Ex : LazyAlternateBase35}
We continue Examples~\ref{Ex : AlternateBase} and~\ref{Ex : XBAlternateBase}. The first $5$ digits of $\LB(\frac{35-5\sqrt{13}}{18})$ are $10212$.
\end{example}

\section{Flip greedy and get lazy}\label{Sec : Flip}

In~\cite[Section 5]{CharlierCisterninoDajani2021}, in the alternate base framework, both greedy and lazy expansions were compared. The following result generalizes this comparison to the Cantor base expansions. 

For a Cantor base $\B=(\beta_n)_{n\in \N}$, we let $A_{\B}$ denote the (possibly infinite) alphabet $[\![0,\sup_{n\in\N}(\ceil{\beta_n}-1)]\!]$. Note that, if the supremum is infinite, the alphabet $A_{\B}$ is made of all non-negative integers. Any greedy and lazy $\B$-expansion belongs to $A_{\B}^{\N}$ and more precisely to the set of infinite words $a\in A_{\B}^{\N}$ such that, for all $n\in \N$, the letter $a_n$ belongs to $[\![0 , \ceil{\beta_n}-1]\!]$. From now on, let $\prod_{n\in \N}[\![0 , \ceil{\beta_n}-1]\!]$ denote this set of infinite words.\\

Let $\theta_{\B}$ be the map defined by
\begin{align*}
\theta_{\B} \colon &\prod_{n\in \N}[\![0 , \ceil{\beta_n}-1]\!]\to \prod_{n\in \N}[\![0 , \ceil{\beta_n}-1]\!], \\ &a_0a_1\cdots\mapsto (\ceil{\beta_0}-1-a_0)(\ceil{\beta_1}-1-a_1)\cdots.
\end{align*}
The map $\theta_{\B}$ is continuous with respect to the topology induced by the prefix distance, bijective and the inverse map $\theta_{\B}^{-1}$ is the map $\theta_{\B}$ itself. 
For any infinite word $a\in \prod_{n\in \N}[\![0 , \ceil{\beta_n}-1]\!]$, we get 
\begin{equation}\label{Eq : ValTheta}
\val_{\B}(\theta_{\B}(a))=x_{\B}-\val_{\B}(a).
\end{equation}
Moreover, the map $\theta_{\B}$ is decreasing with respect to the lexicographic order, that is, for all infinite words $a$ and $b$ in $\prod_{n\in \N}[\![0 , \ceil{\beta_n}-1]\!]$, we get
\begin{equation}\label{Eq : ThetaDecreasing}
a<_{\lex} b \iff  \theta_{\B}(a)>_{\lex}\theta_{\B}(b).
\end{equation}
The map $\theta_{\B}$ is the key of the reasoning of this paper. In fact, as shown in the following result, it will allow us to ``flip'' the greedy expansions in order to get the lazy ones.

\begin{proposition}\label{Pro : LinkGreedyLazy}
For all $x\in [0,1)$ and all $n\in \N$, we have
$\xi_{\B,n}(x_{\B}-x)=\ceil{\beta_n}-1-\varepsilon_{\B,n}(x)$ and $s_{\B,n}(x_{\B}-x)=x_{\B^{(n+1)}}-r_{\B,n}(x)$.
In particular, we get $\LB(x_{\B}-x)=\theta_{\B}(\DB(x))$.
\end{proposition}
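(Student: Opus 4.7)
The natural strategy is induction on $n$, proving both identities simultaneously. First, note that the hypothesis $x\in[0,1)$ guarantees $x_{\B}-x\in(x_{\B}-1,x_{\B}]$, so the lazy algorithm is indeed defined on this input. The key algebraic fact is identity~\eqref{Eq : EqualitiesXB}, which I rewrite in the form
\[
\beta_n x_{\B^{(n)}} = x_{\B^{(n+1)}}+\ceil{\beta_n}-1, \qquad n\in\N,
\]
together with the elementary ceiling/floor identity $\ceil{m-y}=m-\floor{y}$ valid for any $m\in\Z$ and $y\in\R$.

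For the base case $n=0$, I would compute $\beta_0(x_{\B}-x)-x_{\B^{(1)}} = \ceil{\beta_0}-1-\beta_0 x$ using the identity above, then apply the ceiling rule to obtain
\[
\xi_{\B,0}(x_{\B}-x)=\ceil{\ceil{\beta_0}-1-\beta_0 x}=\ceil{\beta_0}-1-\floor{\beta_0 x}=\ceil{\beta_0}-1-\varepsilon_{\B,0}(x).
\]
Plugging this back into the definition of $s_{\B,0}$ and simplifying yields $s_{\B,0}(x_{\B}-x)=x_{\B^{(1)}}-r_{\B,0}(x)$.

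For the inductive step, assume both equalities hold at index $n-1$. Then
\[
\beta_n s_{\B,n-1}(x_{\B}-x)-x_{\B^{(n+1)}} = \beta_n(x_{\B^{(n)}}-r_{\B,n-1}(x))-x_{\B^{(n+1)}} = \ceil{\beta_n}-1-\beta_n r_{\B,n-1}(x),
\]
again by the rewritten~\eqref{Eq : EqualitiesXB}. Applying the ceiling rule and the definition of $\varepsilon_{\B,n}(x)=\floor{\beta_n r_{\B,n-1}(x)}$ gives the first identity at step $n$, and substituting into the recurrence for $s_{\B,n}$ gives the second. The final statement $\LB(x_{\B}-x)=\theta_{\B}(\DB(x))$ then follows digit-by-digit from the first identity together with the definition of $\theta_{\B}$.

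The calculations are routine once one recognizes the right rewriting of~\eqref{Eq : EqualitiesXB}; the only mild subtlety is the ceiling/floor swap, which works precisely because $\ceil{\beta_n}-1$ is an integer. No delicate convergence or ordering argument is needed at this stage since the two identities propagate in lockstep through the induction.
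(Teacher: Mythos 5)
Your proposal is correct and follows essentially the same route as the paper's proof: induction on $n$, rewriting~\eqref{Eq : EqualitiesXB} as $\beta_n x_{\B^{(n)}}=x_{\B^{(n+1)}}+\ceil{\beta_n}-1$, and the ceiling/floor swap $\ceil{m-y}=m-\floor{y}$ for integer $m$, with the two identities propagating together through the induction. The concluding digit-by-digit deduction of $\LB(x_{\B}-x)=\theta_{\B}(\DB(x))$ matches the paper as well.
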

\begin{proof}
Consider $x\in [0,1)$. We proceed by induction on $n$. By~\eqref{Eq : EqualitiesXB}, we have 
\begin{align*}
\xi_{\B,0}(x_{\B}-x)=&\lceil\beta_0(x_{\B}-x)-x_{\B^{(1)}}\rceil\\
=&\ceil{\ceil{\beta_0}-1-\beta_0x}\\
=&\ceil{\beta_0}-1+\ceil{-\beta_0x}\\
=&\ceil{\beta_0}-1-\floor{\beta_0x}\\
=&\ceil{\beta_0}-1-\varepsilon_{\B,0}(x).
\end{align*}
Moreover, we get 
\begin{align*}
s_{\B,0}(x_{\B}-x)=&\beta_0(x_{\B}-x)-(\ceil{\beta_0}-1-\varepsilon_{\B,0}(x))\\
=& \beta_0x_{\B}-(\ceil{\beta_0}-1)-( \beta_0 x - \varepsilon_{\B,0}(x))\\
=& x_{\B^{(1)}}-r_{\B,0}(x)
\end{align*}
where~\eqref{Eq : EqualitiesXB} is used again in the last equality. By induction, for all $n\in \N_{\ge 1}$, we have 
\begin{align*}
\xi_{\B,n}(x_{\B}-x)=&\lceil\beta_n s_{\B,n-1}(x_{\B}-x)-x_{\B^{(n+1)}}\rceil\\
=& \lceil\beta_n (x_{\B^{(n)}}- r_{\B,n-1}(x_{\B}-x))-x_{\B^{(n+1)}}\rceil\\
=&\ceil{\ceil{\beta_n}-1-\beta_n r_{\B,n-1}(x_{\B}-x)}\\
=&\ceil{\beta_n}-1-\floor{\beta_n r_{\B,n-1}(x_{\B}-x)}\\
=&\ceil{\beta_n}-1-\varepsilon_{\B,n}(x)
\end{align*}
and 
\begin{align*}
s_{\B,n}(x_{\B}-x)=&\beta_n s_{\B,n-1}(x_{\B}-x)-\xi_{\B,n}(x_{\B}-x)\\
=& \beta_n( x_{\B^{(n)}}-r_{\B,n-1}(x))-( \ceil{\beta_n}-1-\varepsilon_{\B,n}(x) )\\
=& x_{\B^{(n+1)}}-r_{\B,n}(x).
\end{align*}
In particular, we can conclude that $\LB(x_{\B}-x)=\theta_{\B}(\DB(x))$.
\end{proof}

\begin{example}
Let $\B=(\overline{\frac{1+ \sqrt{13}}{2},\frac{5+ \sqrt{13}}{6}})$ be the alternate base considered in Example~\ref{Ex : AlternateBase}.
By Proposition~\ref{Pro : LinkGreedyLazy}, the lazy $\B$-expansion of $x_{\B}-\frac{-5+2\sqrt{13}}{3}=\frac{25-5\sqrt{13}}{18}$ equals $10(21)^\omega$ 
since $\DB(\frac{-5+2\sqrt{13}}{3})=11$. This coincides with Example~\ref{Ex : LazyAlternateBase35}.
\end{example}

\begin{example}\label{Ex : GreedyTM1XB1}
We continue Example~\ref{Ex : GreedyTM}. The lazy $\B
$-expansion of $x_{\B}-\frac12\simeq 1.23$ has $11120$ as a prefix.
\end{example}

Thanks to Proposition~\ref{Pro : LinkGreedyLazy}, in the sequel, results from~\cite{CharlierCisternino2021} on greedy $\B$-expansions will be translated in terms of lazy $\B$-expansions. 
The differences between the greedy and lazy $\B$-expansions will be highlighted in the text.

\section{First properties of lazy expansions}\label{Sec : FirstPropertiesLazy}

For any alphabet $A$, the \emph{shift operator over $A$}, denoted by $\sigma_A$, is defined by 
\[
	\sigma_A\colon A^\N\to A^\N,\ a_0a_1a_2\cdots\mapsto a_1a_2a_3\cdots.
\] 
Throughout the text, whenever there is no ambiguity on the alphabet, we simply write $\sigma$ instead of $\sigma_{A_{\B}}$.

\begin{lemma}\label{Lem : CommuteShiftTheta}
For all $n\in \N$, we have $\sigma^n \circ \theta_{\B}= \theta_{\B^{(n)}}\circ \sigma^n$ on $\prod_{n\in \N}[\![0 , \ceil{\beta_n}-1]\!]$.
\end{lemma}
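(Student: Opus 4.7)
The plan is to verify the identity pointwise by computing both sides on an arbitrary word $a = a_0 a_1 a_2 \cdots$ in $\prod_{k\in \N}[\![0, \ceil{\beta_k}-1]\!]$ and observing that the resulting sequences coincide letter by letter. Since $\theta_{\B}$ and $\sigma$ act purely coordinatewise (one by a letter substitution, the other by reindexing), the identity should reduce to a matter of checking that the two operations commute after a careful bookkeeping of indices.

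First I would compute $\sigma^n(\theta_{\B}(a))$. Applying $\theta_{\B}$ turns $a$ into the word whose $k$-th letter is $\ceil{\beta_k}-1 - a_k$, and then applying $\sigma^n$ discards the first $n$ letters, yielding an infinite word whose $k$-th letter is $\ceil{\beta_{n+k}}-1-a_{n+k}$. Next I would compute $\theta_{\B^{(n)}}(\sigma^n(a))$: shifting first produces the word $a_n a_{n+1} a_{n+2} \cdots$, and since $\B^{(n)} = (\beta_n, \beta_{n+1}, \ldots)$, the map $\theta_{\B^{(n)}}$ sends its $k$-th letter $a_{n+k}$ to $\ceil{\beta_{n+k}}-1 - a_{n+k}$. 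Comparing the two results, they agree coordinate by coordinate, establishing the equality of functions.

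There is essentially no obstacle here — the only subtle point is that the map $\theta_{\B^{(n)}}$ is defined using the shifted base sequence, so that the ceiling appearing in the substitution at position $k$ is $\ceil{\beta_{n+k}}$ rather than $\ceil{\beta_k}$; this is precisely what makes the commutation work. I would also briefly mention that one should first check that the restriction of $\sigma^n$ sends $\prod_{k\in \N}[\![0, \ceil{\beta_k}-1]\!]$ into $\prod_{k\in \N}[\![0, \ceil{\beta_{n+k}}-1]\!]$, the natural domain of $\theta_{\B^{(n)}}$, so that both compositions are well defined on the same set. An induction on $n$ (reducing to the case $n=1$) would be an equally clean alternative, but the direct coordinate computation is the shortest route.
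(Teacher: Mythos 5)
Your proposal is correct and matches the paper's approach: the paper simply states that the identity is a straightforward verification, and your coordinatewise computation (the $k$-th letter of both sides being $\ceil{\beta_{n+k}}-1-a_{n+k}$) is exactly that verification carried out explicitly, including the sensible remark about the codomain of $\sigma^n$ matching the domain of $\theta_{\B^{(n)}}$.
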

\begin{proof}
This is a straightforward verification.
\end{proof}

\begin{proposition}\label{Pro : CommuteShiftThetaLB}
For all $x \in  (x_{\B}-1,x_{\B}]$ and all $n\in \N$, we have 
\[
\sigma^n(\LB(x))=\LBi{n}(s_{\B,n-1}(x)).
\]
\end{proposition}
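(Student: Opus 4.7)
The plan is to reduce the statement to the corresponding (presumably known) identity for the greedy expansion, namely $\sigma^n(\DB(y)) = \DBi{n}(r_{\B,n-1}(y))$ for $y \in [0,1)$, by pushing everything through the flipping map $\theta_{\B}$ using Proposition~\ref{Pro : LinkGreedyLazy} and Lemma~\ref{Lem : CommuteShiftTheta}.

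Fix $x \in (x_{\B}-1, x_{\B}]$, and set $y = x_{\B} - x \in [0,1)$. First I would rewrite the left-hand side: by Proposition~\ref{Pro : LinkGreedyLazy}, $\LB(x) = \LB(x_{\B} - y) = \theta_{\B}(\DB(y))$. Applying $\sigma^n$ and using Lemma~\ref{Lem : CommuteShiftTheta}, this becomes
\[
\sigma^n(\LB(x)) = \sigma^n(\theta_{\B}(\DB(y))) = \theta_{\B^{(n)}}(\sigma^n(\DB(y))).
\]
Next, invoking the greedy analogue of the statement (established in~\cite{CharlierCisternino2021}), $\sigma^n(\DB(y)) = \DBi{n}(r_{\B,n-1}(y))$, where by convention $r_{\B,-1}(y) = y$ so that the $n=0$ case is covered. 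Since $r_{\B,n-1}(y) \in [0,1)$, one can apply Proposition~\ref{Pro : LinkGreedyLazy} for the Cantor base $\B^{(n)}$ to obtain
\[
\theta_{\B^{(n)}}(\DBi{n}(r_{\B,n-1}(y))) = \LBi{n}(x_{\B^{(n)}} - r_{\B,n-1}(y)).
\]

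Finally I would identify $x_{\B^{(n)}} - r_{\B,n-1}(y)$ with $s_{\B,n-1}(x)$. This is exactly the second identity in Proposition~\ref{Pro : LinkGreedyLazy} applied to $y \in [0,1)$ at index $n-1$, which reads $s_{\B,n-1}(x_{\B}-y) = x_{\B^{(n)}} - r_{\B,n-1}(y)$, i.e., $s_{\B,n-1}(x) = x_{\B^{(n)}} - r_{\B,n-1}(y)$. Combining the chain of equalities yields the claim.

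I do not expect any serious obstacle: the whole argument is a commutative-diagram chase glued together from Proposition~\ref{Pro : LinkGreedyLazy}, Lemma~\ref{Lem : CommuteShiftTheta}, and the greedy shift identity. The only points requiring a little care are handling the base case $n=0$ (where one needs the convention $r_{\B,-1}(y) = y$ and $s_{\B,-1}(x) = x$, consistent with the algorithmic definitions) and checking that each intermediate quantity lies in the domain where the relevant result applies, in particular that $r_{\B,n-1}(y) \in [0,1)$ so that Proposition~\ref{Pro : LinkGreedyLazy} may be used for $\B^{(n)}$.
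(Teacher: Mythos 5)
Your argument is correct and is essentially identical to the paper's proof: both reduce the claim to the greedy shift identity from~\cite{CharlierCisternino2021} via $\LB(x)=\theta_{\B}(\DB(x_{\B}-x))$, commute $\sigma^n$ past $\theta_{\B}$ with Lemma~\ref{Lem : CommuteShiftTheta}, and then translate back using both identities of Proposition~\ref{Pro : LinkGreedyLazy} for $\B^{(n)}$. Your extra remark about the $n=0$ convention ($r_{\B,-1}(y)=y$, $s_{\B,-1}(x)=x$) is a reasonable bit of care that the paper leaves implicit.
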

\begin{proof}
This is a consequence of Proposition~\ref{Pro : LinkGreedyLazy}, Lemma~\ref{Lem : CommuteShiftTheta} and~\cite[Proposition 8]{CharlierCisternino2021} since for all $x \in  (x_{\B}-1,x_{\B}]$ we have 
\begin{align*}
\sigma^n(\LB(x))&= \sigma^n\circ  \theta_{\B} (d_{\B}(x_{\B}-x))\\
&= \theta_{\B^{(n)}}\circ  \sigma^n ( d_{\B}(x_{\B}-x))\\
&= \theta_{\B^{(n)}}( d_{\B^{(n)}}(r_{\B,n-1}(x_{\B}-x)))\\
&= \LBi{n}(x_{\B^{(n)}}-r_{\B,n-1}(x_{\B}-x))\\
&= \LBi{n}(s_{\B,n-1}(x)).\qedhere
\end{align*}
\end{proof}

\begin{proposition}
\label{Pro : LazyLess}
Let $a$ be an infinite word over $\N$ and $x\in (x_{\B}-1,x_{\B}]$.
We have $a=\LB(x)$ if and only if $a \in \prod_{n\in \N}[\![0 , \ceil{\beta_n}-1]\!]$, $\val_{\B}(a)=x$ and for all $\ell\in\N$,
\begin{equation}
\sum_{n=\ell+1}^{+\infty}\frac{a_n}{\prod_{k=0}^n{\beta_k}} >  \frac{x_{\B^{(\ell+1)}}-1}{\prod_{k=0}^{\ell}{\beta_k}}.\nonumber
\end{equation}
\end{proposition}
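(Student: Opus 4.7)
The plan is to reduce the statement to the analogous characterization of greedy $\B$-expansions established in~\cite{CharlierCisternino2021}, using the flip $\theta_{\B}$ introduced in Section~\ref{Sec : Flip}. The greedy characterization reads: an infinite word $b$ over $\N$ equals $\DB(y)$ for some $y\in[0,1)$ if and only if $b\in\prod_{n\in\N}[\![0,\ceil{\beta_n}-1]\!]$, $\val_{\B}(b)=y$, and for all $\ell\in\N$,
\[
\sum_{n=\ell+1}^{+\infty}\frac{b_n}{\prod_{k=0}^n\beta_k}<\frac{1}{\prod_{k=0}^{\ell}\beta_k}.
\]

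First, since $x\in(x_{\B}-1,x_{\B}]$, writing $y=x_{\B}-x$ gives $y\in[0,1)$. By Proposition~\ref{Pro : LinkGreedyLazy}, $\LB(x)=\theta_{\B}(\DB(y))$, and $\theta_{\B}$ is a bijection on $\prod_{n\in\N}[\![0,\ceil{\beta_n}-1]\!]$ that is its own inverse. Hence, for a candidate word $a\in\prod_{n\in\N}[\![0,\ceil{\beta_n}-1]\!]$, setting $b=\theta_{\B}(a)$, we have $a=\LB(x)$ if and only if $b=\DB(y)$. So everything reduces to transferring the three conditions of the proposition through the flip.

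Next, I would verify the equivalences term by term. Membership in $\prod_{n\in\N}[\![0,\ceil{\beta_n}-1]\!]$ is preserved by $\theta_{\B}$ by construction. By~\eqref{Eq : ValTheta}, $\val_{\B}(b)=x_{\B}-\val_{\B}(a)$, so $\val_{\B}(a)=x$ is equivalent to $\val_{\B}(b)=y$. Finally, a direct computation using the fact that $\sum_{n=\ell+1}^{+\infty}\frac{\ceil{\beta_n}-1}{\prod_{k=0}^n\beta_k}=\frac{x_{\B^{(\ell+1)}}}{\prod_{k=0}^{\ell}\beta_k}$ gives
\[
\sum_{n=\ell+1}^{+\infty}\frac{b_n}{\prod_{k=0}^n\beta_k}=\frac{x_{\B^{(\ell+1)}}}{\prod_{k=0}^{\ell}\beta_k}-\sum_{n=\ell+1}^{+\infty}\frac{a_n}{\prod_{k=0}^n\beta_k},
\]
from which the lazy tail inequality on $a$ is equivalent, after subtraction, to the greedy tail inequality on $b$ (strict inequality in both cases, with the sign flipped).

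Combining the three equivalences, $a=\LB(x)$ if and only if $a$ satisfies the three conditions listed, completing the argument by appeal to the greedy characterization from~\cite{CharlierCisternino2021}. There is no real obstacle: the only thing to be mindful of is that the three conditions have to be handled symmetrically on both sides of the ``if and only if,'' and that $y=x_{\B}-x$ actually lies in $[0,1)$, which uses both endpoints of the interval $(x_{\B}-1,x_{\B}]$.
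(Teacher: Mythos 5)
Your proposal is correct and follows essentially the same route as the paper: both reduce the statement, via Proposition~\ref{Pro : LinkGreedyLazy} and the involution $\theta_{\B}$, to the greedy characterization of $\DB$ from~\cite[Lemma 9]{CharlierCisternino2021}, and then transfer the value and tail conditions using~\eqref{Eq : ValTheta} and the identity $\sum_{n=\ell+1}^{+\infty}\frac{\ceil{\beta_n}-1}{\prod_{k=0}^n\beta_k}=\frac{x_{\B^{(\ell+1)}}}{\prod_{k=0}^{\ell}\beta_k}$.
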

\begin{proof}
Consider $a\in \N^{\N}$ and $x\in (x_{\B}-1,x_{\B}]$. By Proposition~\ref{Pro : LinkGreedyLazy}, we have $a=\LB(x)$ if and only if $a \in \prod_{n\in \N}[\![0 , \ceil{\beta_n}-1]\!]$ and $\theta_{\B}(a)=\DB(x_{\B}-x)$. By~\cite[Lemma 9]{CharlierCisternino2021}, we get $a=\LB(x)$ if and only if $a \in \prod_{n\in \N}[\![0 , \ceil{\beta_n}-1]\!]$, $\val_{\B}(\theta_{\B}(a))=x_{\B}-x$ and for all $N\in\N$,
\begin{equation*}
\sum_{n=N+1}^{+\infty}\frac{\ceil{\beta_n}-1-a_n}{\prod_{k=0}^n{\beta_k}} < \frac{1}{\prod_{k=0}^{N}{\beta_k}}.
\end{equation*}
We conclude the proof by~\eqref{Eq : ValTheta} and by definition of $x_{\B^{(N+1)}}$.
\end{proof}

\begin{proposition}
\label{Pro : LazyLexMin}
The lazy $\B$-expansion of a real number $x\in (x_{\B}-1,x_{\B}]$ is lexicographically minimal among all $\B$-representations of $x$ in $\prod_{n\in \N}[\![0 , \ceil{\beta_n}-1]\!]$.
\end{proposition}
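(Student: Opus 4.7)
The plan is to reduce the statement to the corresponding lexicographic maximality property of greedy $\B$-expansions (which is part of the combinatorial framework of \cite{CharlierCisternino2021}, and which would have been invoked implicitly here) via the flipping map $\theta_{\B}$ of Section~\ref{Sec : Flip}. The key ingredients are Proposition~\ref{Pro : LinkGreedyLazy}, identity \eqref{Eq : ValTheta}, and the order-reversing property \eqref{Eq : ThetaDecreasing}.

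Fix $x \in (x_{\B}-1, x_{\B}]$ and set $y = x_{\B} - x$, so $y \in [0,1)$. Let $a$ be any $\B$-representation of $x$ lying in $\prod_{n\in\N}[\![0, \ceil{\beta_n}-1]\!]$. I would first observe that $\theta_{\B}(a)$ also lies in $\prod_{n\in\N}[\![0, \ceil{\beta_n}-1]\!]$ by definition of $\theta_{\B}$, and by \eqref{Eq : ValTheta},
\[
\val_{\B}(\theta_{\B}(a)) \;=\; x_{\B} - \val_{\B}(a) \;=\; x_{\B} - x \;=\; y,
\]
so $\theta_{\B}(a)$ is a $\B$-representation of $y$ in the correct digit set.

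Next I would invoke the greedy lexicographic maximality: since $y \in [0,1)$, the greedy expansion $\DB(y)$ is lexicographically largest among all $\B$-representations of $y$ in $\prod_{n\in\N}[\![0, \ceil{\beta_n}-1]\!]$. Hence
\[
\theta_{\B}(a) \;\le_{\lex}\; \DB(y).
\]
Applying $\theta_{\B}$ to both sides and using the order-reversing property \eqref{Eq : ThetaDecreasing} together with the involutivity $\theta_{\B}^{-1} = \theta_{\B}$ yields
\[
a \;=\; \theta_{\B}(\theta_{\B}(a)) \;\ge_{\lex}\; \theta_{\B}(\DB(y)).
\]
Finally, Proposition~\ref{Pro : LinkGreedyLazy} identifies $\theta_{\B}(\DB(y)) = \LB(x_{\B} - y) = \LB(x)$, giving $\LB(x) \le_{\lex} a$, as required.

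There is essentially no obstacle beyond correctly unwinding the definitions: the work was done once when establishing the flip correspondence in Proposition~\ref{Pro : LinkGreedyLazy}, and every subsequent statement about lazy expansions becomes a mechanical transport of the greedy counterpart through $\theta_{\B}$. The only point requiring a moment of care is verifying that $\theta_{\B}$ preserves the ambient digit set $\prod_{n\in\N}[\![0, \ceil{\beta_n}-1]\!]$ so that the greedy comparison applies to $\theta_{\B}(a)$; this is immediate from the definition of $\theta_{\B}$.
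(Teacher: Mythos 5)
Your proof is correct and follows essentially the same route as the paper: both transport the problem through the flip map $\theta_{\B}$ using \eqref{Eq : ValTheta}, \eqref{Eq : ThetaDecreasing} and Proposition~\ref{Pro : LinkGreedyLazy}, and then invoke the lexicographic maximality of the greedy expansion from \cite{CharlierCisternino2021}; the paper merely phrases the argument as a proof by contradiction rather than your direct comparison. No gaps.
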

\begin{proof}
Let $x\in (x_{\B}-1,x_{\B}]$ and let $a\in \prod_{n\in \N}[\![0 , \ceil{\beta_n}-1]\!]$ be a $\B$-representation of $x$. Suppose that $a<_{\lex} \LB(x)$. 
By~\eqref{Eq : ThetaDecreasing}, we get $\theta_{\B}(a)>_{\lex} \theta_{\B}(\LB(x))$.  By~\eqref{Eq : ValTheta}, $\theta_{\B}(a)$ is a $\B$-representation of $x_{\B}-x$. Moreover, by Proposition~\ref{Pro : LinkGreedyLazy} and since the inverse map $\theta_{\B}^{-1}$ is the map $\theta_{\B}$ itself, we have $\theta_{\B}(\LB(x))=\DB(x_{\B}-x)$. This is absurd since, by~\cite[Proposition 12]{CharlierCisternino2021}, $\DB(x_{\B}-x)$ is lexicographically maximal among all $\B$-representations of $x_{\B}-x$. 
\end{proof}

Note that, contrary to~\cite[Proposition 12]{CharlierCisternino2021}, it cannot be stated that ``the lazy $\B$-expansion of a real number $x\in (x_{\B}-1,x_{\B}]$ is lexicographically minimal among all $\B$-representations of $x$''. In fact, the alphabet of the $\B$-representations of $x$ must be fixed as shown in the following example.

\begin{example}\label{Ex : AlphabetImportant}
Let $\B$ be the alternate base from Example~\ref{Ex : AlternateBase} and consider $x=8-2\sqrt{13}$. We have $x\in (x_{\B}-1,x_{\B}]$ and the lazy $\B$-expansion of $x$ has $01$ as a prefix. However, the infinite word $003330^\omega$ is a $\B$-representation of $x$ and $003330^\omega<_{\lex}\LB(x)$. This does not contradict Proposition~\ref{Pro : LazyLexMin} since the infinite word $003330^\omega$ does not belong to $\prod_{n\in \N}[\![0 , \ceil{\beta_n}-1]\!]$. 
\end{example}

\begin{proposition}
\label{pro:Increasing}
The function $\LB\colon (x_{\B}-1,x_{\B}]\to {A_{\B}}^\N$ is increasing: 
\[
	\forall x,y \in (x_{\B}-1,x_{\B}],\quad x<y \iff \LB(x) <_{\lex} \LB(y).
\]
\end{proposition}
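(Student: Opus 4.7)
The plan is to reduce the statement to the analogous (and already established) monotonicity of the greedy expansion via the ``flip'' map $\theta_\B$. Given $x, y \in (x_\B - 1, x_\B]$, set $x' = x_\B - x$ and $y' = x_\B - y$; both lie in $[0,1)$ by definition of $x_\B$. Under this substitution, the inequality $x < y$ becomes simply $x' > y'$.

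Next I would apply Proposition~\ref{Pro : LinkGreedyLazy}, which gives $\LB(x) = \theta_\B(\DB(x'))$ and $\LB(y) = \theta_\B(\DB(y'))$. It then suffices to show the composite chain
\[
x < y \;\iff\; x' > y' \;\iff\; \DB(x') >_{\lex} \DB(y') \;\iff\; \theta_\B(\DB(x')) <_{\lex} \theta_\B(\DB(y')).
\]
The final equivalence is immediate from~\eqref{Eq : ThetaDecreasing}, since $\theta_\B$ is strictly decreasing with respect to the lexicographic order.

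The middle equivalence is the greedy analogue of the statement we are trying to prove, namely that $\DB \colon [0,1) \to A_\B^\N$ is strictly increasing for the lexicographic order. This is the only nontrivial input and is proved in~\cite{CharlierCisternino2021}; I would simply cite it. (If one wanted a self-contained argument, it would follow from the lexicographic maximality of $\DB(x')$ among $\B$-representations of $x'$ combined with the fact that a greedy expansion $\DB(x')$ with $\val_\B(\DB(x')) = x' < y'$ cannot lexicographically exceed $\DB(y')$, since any finite prefix agreement would force the tails to compare in the same direction as the values.)

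The main potential obstacle is justifying the strictness: one has to exclude the possibility that $x < y$ but $\DB(x') = \DB(y')$, which is ruled out because the value map $\val_\B$ is well-defined on sequences in $\prod_{n \in \N} [\![0, \lceil \beta_n \rceil - 1]\!]$ and yields a unique real. With this, all three equivalences are strict, and concatenating them gives the desired characterization $x < y \iff \LB(x) <_{\lex} \LB(y)$.
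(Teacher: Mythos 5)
Your proposal is correct and follows essentially the same route as the paper: translate via $x\mapsto x_{\B}-x$ into $[0,1)$, invoke the greedy monotonicity from~\cite{CharlierCisternino2021}, apply Proposition~\ref{Pro : LinkGreedyLazy}, and flip the lexicographic order using~\eqref{Eq : ThetaDecreasing}. No gaps; the chain of equivalences is exactly the paper's argument.
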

\begin{proof}
Consider $x,y \in (x_{\B}-1,x_{\B}]$. By~\cite[Proposition 13]{CharlierCisternino2021}, Proposition~\ref{Pro : LinkGreedyLazy} and~\eqref{Eq : ThetaDecreasing}, we have 
\begin{align*}
x<y \iff & x_{\B}-x>x_{\B}-y\\
 \iff & \DB(x_{\B}-x)>_{\lex} \DB(x_{\B}-y)\\
 \iff & \theta_{\B}(\LB(x))>_{\lex} \theta_{\B}(\LB(y))\\
\iff & \LB(x)<_{\lex} \LB(y).\qedhere
\end{align*}
\end{proof}

\begin{remark}
Considering two Cantor bases $\boldsymbol{\alpha}=(\alpha_n)_{n\in\N}$ and $\B=(\beta_n)_{n\in\N}$ such that for all $n\in\N$, $\prod_{i=0}^n\alpha_i \le \prod_{i=0}^n\beta_i$, by~\cite[Proposition 15]{CharlierCisternino2021}, we have $\DA(x)\le_{\lex} \DB(x)$ for all $x\in [0,1)$.
However, an analogous result cannot be obtained for the lazy expansions. 
In fact, since the interval of definition of the lazy expansions depends on the considered Cantor base, it is not possible to state a result of the form ``for all $x\in I$, we have  $\LA(x)\le_{\lex} \LB(x)$ (or  $\LA(x)\ge_{\lex} \LB(x)$)'' where $I$ is a fixed interval.
Moreover, it is neither correct to say ``for all $x\in [0,1)$, we have  $\LA(x_{\boldsymbol{\alpha}}-x)\le_{\lex} \LB(x_{\B}-x)$ (or  $\LA(x_{\boldsymbol{\alpha}}-x)\ge_{\lex} \LB(x_{\boldsymbol{\alpha}}-x)$)''.
Indeed, this can already be seen while considering real bases, that is $\B=(\beta,\beta,\ldots)$ with $\beta>1$, as illustrated in Figure~\ref{Fig : RealBasesLazyNotOrdered} (where the notation $\beta$, $x_{\beta}$ and $\ell_{\beta}(\cdot)$ are used instead of $\B$, $x_{\B}$ and $\ell_{\B}(\cdot)$). 
\begin{figure}
\[
	\begin{array}{c|ccc}
	\beta & 2 & \frac{11}{5} & \frac{5}{2} \\[3pt]
	\hline
	x_\beta & \tfrac{\ceil{2}-1}{2-1}=1 & \tfrac{\ceil{\frac{11}{5}}-1}{\frac{11}{5}-1}=\frac53 & \tfrac{\ceil{\frac{5}{2}}-1}{\frac{5}{2}-1}= \frac43\\
	\ell_{\beta}(x_\beta-\frac{1}{2}) & 01^\omega & 1221\cdots & 1211\cdots 
	\end{array}
\]
\caption{Some lazy $\B$-expansions when $\B=(\beta,\beta,\ldots)$ with $\beta>1$.}
\label{Fig : RealBasesLazyNotOrdered}
\end{figure}
\end{remark}

\begin{remark}
Note that, some results as Propositions~\ref{Pro : CommuteShiftThetaLB} and~\ref{pro:Increasing} could also have been proved easily without any prerequisite from~\cite{CharlierCisternino2021}. In this paper, a choice has been made, that is to use as much as possible Proposition~\ref{Pro : LinkGreedyLazy} and results from~\cite{CharlierCisternino2021}.
\end{remark}

\section{Quasi-lazy expansions}\label{Sec :  QuasiGreedyLazy}

In this section, we define the quasi-lazy $\B$-expansion of $x_{\B}-1$ in order to obtain an analogous of Parry's theorem~\cite{Parry1960} characterizing the lazy expansions of real numbers in $(x_{\B}-1,x_{\B}]$.

First, let us define the \emph{quasi-greedy $\B$-expansion of $1$} by
\begin{equation}
\label{Eq : QuasiGreedy}
\qDB(1)=\lim_{x\to 1^{-}} \DB(x)
\end{equation}
where the limit is taken with respect to the prefix distance of infinite words.
Note that this limit exists by left continuity of $\DB$ in the neighborhood of $1$. 

\begin{remark}
The quasi-greedy $\B$-expansion of $1$ obtained in~\eqref{Eq : QuasiGreedy} coincides with the one defined in~\cite{CharlierCisternino2021}. In fact, let $t_0t_1\cdots$ denote the quasi-greedy $\qDB(1)$ from~\cite{CharlierCisternino2021}. By~\cite[Theorem 26 and Corollary 36]{CharlierCisternino2021}, for all $n\in \N$, the word $t_0\cdots t_n0^\omega$ is the greedy $\B$-expansion of a real number in $x_n\in[0,1)$. We have $\lim_{n\to +\infty}x_n=1$ and $\lim_{n\to +\infty} \DB(x_n)=\qDB(1)$. 
Hence, in what follows, the results from~\cite{CharlierCisternino2021} in terms of $\qDB(1)$ can be used.

Note that, in~\cite{CharlierCisternino2021}, we made a choice of definition for the greedy $\B$-expansion of $1$ and defined $\qDB(1)$ accordingly. However, in this paper I decided not to define the greedy $\B$-expansion of $1$. In fact, if this were the case, one would have expected to define the lazy $\B$-expansion of $x_{\B} -1$ analogously. This would have been done by extending the lazy algorithm over $x_{\B} -1 $ as in~\cite{CharlierCisterninoDajani2021}. 
However, in that case, $\LB(x_{\B}-1)$ would have not been the image of $\DB(1)$, chosen as in~\cite{CharlierCisternino2021},  by the map $\theta_{\B}$ when $\beta_0$ is an integer since if $\beta_0\in \N_{\ge 2}$, we have $\DB(1)=\beta_0 0^\omega$ whereas the first letter of $\LB(x_{\B}-1)$ is $0$.
\end{remark}

In order to get similar results from~\cite{CharlierCisternino2021} for lazy expansions, we define the \emph{quasi-lazy $\B$-expansion} of $x_{\B}-1$ as follows: 
\begin{equation}
\label{Eq : QuasiLazy}
\qLB(x_{\B}-1)=\lim_{x \to (x_{\B}-1)^+} \LB(x).
\end{equation}
Again, this limit exists by right continuity of $\LB$ in the neighborhood of $x_{\B}-1$. 
Let us first prove that, similarly to Proposition~\ref{Pro : LinkGreedyLazy}, the ``flip'' of the quasi-greedy $\B$-expansions of $1$ is the quasi-lazy $\B$-expansion of $x_{\B}-1$.

\begin{proposition}\label{Pro : LinkQuasiGreedyQuasiLazy}
We have $\qLB(x_{\B}-1)=\theta_{\B}(\qDB(1))$.
\end{proposition}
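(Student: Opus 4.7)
My plan is to reduce the statement to the already-established pointwise flip relation, Proposition~\ref{Pro : LinkGreedyLazy}, and then pass to the limit using the continuity of $\theta_{\B}$ noted in Section~\ref{Sec : Flip}. This mirrors the strategy used for Propositions~\ref{Pro : CommuteShiftThetaLB} and~\ref{pro:Increasing}, which also derive lazy statements from their greedy counterparts via $\theta_{\B}$.

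First I would make the substitution $y = x_{\B}-x$. This provides a decreasing bijection between $(x_{\B}-1, x_{\B}]$ and $[0,1)$, under which $x \to (x_{\B}-1)^+$ corresponds to $y \to 1^-$. For every such $x = x_{\B}-y$ with $y \in [0,1)$, Proposition~\ref{Pro : LinkGreedyLazy} yields
\[
\LB(x) = \LB(x_{\B}-y) = \theta_{\B}(\DB(y)).
\]
Substituting this identity into the defining limit~\eqref{Eq : QuasiLazy} of $\qLB(x_{\B}-1)$ gives
\[
\qLB(x_{\B}-1) = \lim_{x \to (x_{\B}-1)^+} \LB(x) = \lim_{y \to 1^-} \theta_{\B}(\DB(y)).
\]

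Finally, since $\theta_{\B}$ is continuous with respect to the prefix distance (as recorded just before Proposition~\ref{Pro : LinkGreedyLazy}), I can pass the limit through $\theta_{\B}$, obtaining
\[
\qLB(x_{\B}-1) = \theta_{\B}\!\left(\lim_{y \to 1^-} \DB(y)\right) = \theta_{\B}(\qDB(1))
\]
by the definition~\eqref{Eq : QuasiGreedy} of $\qDB(1)$. The existence of both one-sided limits is guaranteed by the remarks following~\eqref{Eq : QuasiGreedy} and~\eqref{Eq : QuasiLazy}, so no convergence issue arises. I do not anticipate a real obstacle: the only delicate step is the interchange of $\theta_{\B}$ and the limit, and it is immediate from continuity. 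The whole argument amounts to a clean change of variables plus one application of continuity.
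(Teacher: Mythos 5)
Your proof is correct and follows essentially the same route as the paper: apply the pointwise flip identity of Proposition~\ref{Pro : LinkGreedyLazy} along points approaching $x_{\B}-1$ from the right (equivalently $1$ from the left after the change of variables $y=x_{\B}-x$), then use the continuity of $\theta_{\B}$ for the prefix distance to exchange it with the limit defining the quasi-expansions. The only cosmetic difference is that the paper phrases the limit via an arbitrary sequence $x_n\to 1^-$ rather than a function limit, which changes nothing of substance.
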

\begin{proof}
Consider a sequence of real numbers $(x_n)_{n\in \N}\in [0,1)^{\N}$ such that $\lim_{n\to +\infty}x_n=1$.
We have $(x_{\B}-x_n)_{n\in \N}\in (x_{\B}-1,x_{\B}]^{\N}$ and $\lim_{n\to +\infty}(x_{\B}-x_n)=x_{\B}-1$. Hence, by continuity of $\theta_{\B}$ and by Proposition~\ref{Pro : LinkGreedyLazy}, we get
\begin{align*}
\theta_{\B}(\qDB(1))&=\lim_{n\to +\infty} \theta_{\B}(\DB(x_n))\\
&=\lim_{n\to +\infty} \LB(x_{\B}-x_n)\\
&=\qLB(x_{\B}-1).\qedhere
\end{align*}
\end{proof}

\begin{example}\label{Ex : AlternateBaseQuasiLazy}
Consider the alternate base from Example~\ref{Ex : AlternateBase}. We have $\qDB(1)=200(10)^\omega$, $\qDBi{1}(1)=(10)^\omega$, $\qLB(x_{\B}-1)=012(02)^\omega$ and $\LBi{1}(x_{\B^{(1)}}-1)=(02)^\omega$.
\end{example}

\begin{proposition}\label{Pro : QuasiLazyReprOfXB-1}
The quasi-lazy expansion $\qLB(x_{\B}-1)$ is a $\B$-representation of $x_{\B}-1$.
\end{proposition}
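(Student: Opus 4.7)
The plan is to reduce the statement, via Proposition~\ref{Pro : LinkQuasiGreedyQuasiLazy}, to the analogous fact for the quasi-greedy expansion, namely $\val_{\B}(\qDB(1))=1$. Indeed, Proposition~\ref{Pro : LinkQuasiGreedyQuasiLazy} gives $\qLB(x_{\B}-1)=\theta_{\B}(\qDB(1))$, and then~\eqref{Eq : ValTheta} yields
\[
\val_{\B}(\qLB(x_{\B}-1)) \;=\; \val_{\B}(\theta_{\B}(\qDB(1))) \;=\; x_{\B}-\val_{\B}(\qDB(1)),
\]
so the desired conclusion would follow immediately from $\val_{\B}(\qDB(1))=1$.

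To establish the latter, the key observation is that the restriction of $\val_{\B}$ to $\prod_{n\in\N}[\![0,\ceil{\beta_n}-1]\!]$ is continuous for the prefix distance on infinite words. This is where the standing assumption $x_{\B}<+\infty$ enters: if two words in this product space share a common prefix of length $N+1$, then their $\val_{\B}$ values differ by at most the tail $\sum_{n>N}\tfrac{\ceil{\beta_n}-1}{\prod_{k=0}^n\beta_k}$, which tends to $0$ as $N\to+\infty$ since the full series equals $x_{\B}$ and converges.

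Given this continuity, I would fix any sequence $(x_n)_{n\in\N}$ in $[0,1)$ with $x_n\to 1^-$. Each greedy expansion $\DB(x_n)$ lies in $\prod_{n\in\N}[\![0,\ceil{\beta_n}-1]\!]$ and satisfies $\val_{\B}(\DB(x_n))=x_n$. Passing to the prefix-distance limit, which exists by the defining equation~\eqref{Eq : QuasiGreedy} of $\qDB(1)$, and invoking continuity gives
\[
\val_{\B}(\qDB(1)) \;=\; \lim_{n\to+\infty}\val_{\B}(\DB(x_n)) \;=\; \lim_{n\to+\infty}x_n \;=\; 1.
\]
Combining with the displayed identity above produces $\val_{\B}(\qLB(x_{\B}-1))=x_{\B}-1$. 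Since $\qLB(x_{\B}-1)$ lies in $\prod_{n\in\N}[\![0,\ceil{\beta_n}-1]\!]\subseteq \N^{\N}$ (being a prefix-distance limit of words from this closed set), it is a genuine $\B$-representation, as required.

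The whole argument is short and conceptual; the only subtlety is the continuity of $\val_{\B}$, and I do not anticipate any real obstacle there, as it is a direct consequence of the hypothesis $x_{\B}<+\infty$.
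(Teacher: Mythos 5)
Your argument is correct and follows essentially the same route as the paper: reduce via Proposition~\ref{Pro : LinkQuasiGreedyQuasiLazy} and~\eqref{Eq : ValTheta} to the identity $\val_{\B}(\qDB(1))=1$. The only difference is that the paper simply cites this identity from the earlier work (Proposition~22 of~\cite{CharlierCisternino2021}), whereas you re-derive it through the (valid) observation that $x_{\B}<+\infty$ makes $\val_{\B}$ continuous on $\prod_{n\in \N}[\![0 , \ceil{\beta_n}-1]\!]$ for the prefix distance, so your write-up is a self-contained version of the same proof.
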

\begin{proof}
This is direct by Proposition~\ref{Pro : LinkQuasiGreedyQuasiLazy},~\cite[Proposition 22]{CharlierCisternino2021} and~\eqref{Eq : ValTheta}.
\end{proof}

Note that, in comparison with the quasi-greedy $\B$-expansion of $1$ which is always infinite, the quasi-lazy $\B$-expansion of $x_{\B}-1$ can be finite. 

\begin{example}\label{Ex : QuasiGreedyDiffQUasiLazyEqualIntegers}
Consider an alternate base $\B=\overline{(\beta_0,\ldots,\beta_{p-1})}$ such that for all $i\in\Int$, $\beta_i \in \N_{\ge 2}$. We get $\qDBi{i}(1)=((\beta_i-1)\cdots (\beta_{p-1}-1)(\beta_0-1)\cdots (\beta_{i-1}-1))^\omega$ and since $x_{\B^{(i)}}=1$ for all $i\in \Int$, we have $\qLBi{i}(0)=0^\omega$.
\end{example}

The following result gives a necessary condition on the Cantor base $\B$ to have a finite quasi-lazy $\B$-expansion of $x_{\B}-1$.

\begin{proposition}\label{Pro : QuasiLazyFinite}
If the quasi-lazy $\B$-expansion of $x_{\B}-1$ is finite of length $n\in \N$, then $x_{\B^{(n)}}=1$.
\end{proposition}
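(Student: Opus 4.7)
The plan is to exploit the identity $\sigma^n\circ\LB = \LBi{n}\circ s_{\B,n-1}$ established in Proposition~\ref{Pro : CommuteShiftThetaLB}, together with the built-in constraint that $s_{\B,n-1}(x)$ belongs to the lazy domain $(x_{\B^{(n)}}-1,\, x_{\B^{(n)}}]$ for every $x\in(x_\B-1,x_\B]$. The intuition is that a finite quasi-lazy expansion forces these remainders to tend to $0$ from above, which pushes the left endpoint of that interval below $0$ and yields $x_{\B^{(n)}}\le 1$.

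Write $\qLB(x_\B-1)=\xi_0\cdots\xi_{n-1}0^\omega$ with $\xi_{n-1}\ne 0$; the case $n=0$ is immediate since then $\qLB(x_\B-1)=0^\omega$ is a $\B$-representation of $x_\B-1$ by Proposition~\ref{Pro : QuasiLazyReprOfXB-1}, so $x_\B=1=x_{\B^{(0)}}$. For $n\ge 1$, the limit definition~\eqref{Eq : QuasiLazy} tells us that $\LB(x)$ shares arbitrarily long prefixes with $\qLB(x_\B-1)$ as $x\to(x_\B-1)^+$, so $\sigma^n(\LB(x))\to 0^\omega$ in the prefix distance. By Proposition~\ref{Pro : CommuteShiftThetaLB}, this shifted sequence equals $\LBi{n}(s_{\B,n-1}(x))$ and is therefore a $\B^{(n)}$-representation of $s_{\B,n-1}(x)$.

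The main step is to promote the prefix-distance convergence to convergence of values. If the first $N$ digits of $\sigma^n(\LB(x))$ are zero, then since each remaining digit lies in $[\![0,\ceil{\beta_{n+k}}-1]\!]$,
\[
0\le s_{\B,n-1}(x)\le \sum_{k=N}^{\infty}\frac{\ceil{\beta_{n+k}}-1}{\prod_{i=n}^{n+k}\beta_i},
\]
which is the tail of the convergent series defining $x_{\B^{(n)}}$ and hence vanishes as $N\to\infty$. Consequently $s_{\B,n-1}(x)\to 0$ as $x\to(x_\B-1)^+$. Combined with the containment $s_{\B,n-1}(x)\in(x_{\B^{(n)}}-1,\, x_{\B^{(n)}}]$ coming from Proposition~\ref{Pro : CommuteShiftThetaLB}, passing to the limit yields $x_{\B^{(n)}}-1\le 0$, and the universal lower bound $x_{\B^{(n)}}\ge 1$ recalled just after~\eqref{Eq : EqualitiesXB} forces $x_{\B^{(n)}}=1$.

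The only genuinely delicate step is the transfer from prefix convergence to value convergence, which is where the defining Cantor condition $\prod_{i\in\N}\beta_i=+\infty$ (equivalently, the convergence of the series defining $x_{\B^{(n)}}$) does the real work; everything else is a direct unpacking of the cited propositions.
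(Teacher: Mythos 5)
Your argument is correct, but it follows a genuinely different route from the paper. The paper stays combinatorial: it flips the finite quasi-lazy expansion via Proposition~\ref{Pro : LinkQuasiGreedyQuasiLazy} to get $\qDB(1)=(\ceil{\beta_0}-1-\ell_0)\cdots(\ceil{\beta_{n-1}}-1-\ell_{n-1})(\ceil{\beta_n}-1)(\ceil{\beta_{n+1}}-1)\cdots$, invokes the lexicographic bound $\sigma^n(\qDB(1))\le_{\lex}\qDBi{n}(1)$ from the greedy paper to force $\sigma^n(\qDB(1))=\qDBi{n}(1)$ (since the left-hand side is already the maximal word on the shifted alphabets), and concludes $x_{\B^{(n)}}=\val_{\B^{(n)}}(\qDBi{n}(1))=1$. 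You instead stay on the lazy side and argue analytically: the limit definition~\eqref{Eq : QuasiLazy} gives long zero blocks in $\sigma^n(\LB(x))$ as $x\to(x_{\B}-1)^+$, the identity of Proposition~\ref{Pro : CommuteShiftThetaLB} identifies this shifted word as $\LBi{n}(s_{\B,n-1}(x))$, a tail estimate forces $s_{\B,n-1}(x)\to 0$, and the containment $s_{\B,n-1}(x)\in(x_{\B^{(n)}}-1,x_{\B^{(n)}}]$ together with $x_{\B^{(n)}}\ge 1$ squeezes $x_{\B^{(n)}}=1$. Your route is self-contained within this paper (no appeal to the external greedy lexicographic result and no flip of quasi-expansions), at the cost of using two facts only implicit in the statement you cite: that the remainders lie in the shifted lazy domain (this is what makes Proposition~\ref{Pro : CommuteShiftThetaLB} meaningful, and is visible in its proof via $s_{\B,n-1}(x)=x_{\B^{(n)}}-r_{\B,n-1}(x_{\B}-x)$), and that $\LBi{n}$ outputs genuine $\B^{(n)}$-representations. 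The paper's route is shorter and yields the extra structural fact $\sigma^n(\qDB(1))=\qDBi{n}(1)$. One small correction: the convergence of the tail of the series for $x_{\B^{(n)}}$ is not equivalent to the Cantor condition $\prod_{i\in\N}\beta_i=+\infty$ (which does not imply $x_{\B}<+\infty$, as the base $(1+\tfrac{1}{n+1})_{n\in\N}$ shows); it comes from the standing assumption $x_{\B}<+\infty$ together with~\eqref{Eq : EqualitiesXB}, which is exactly the hypothesis under which the lazy and quasi-lazy expansions are defined, so the step itself is sound.
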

\begin{proof}
Suppose that $\qLB(x_{\B}-1) =\ell_0\cdots \ell_{n-1} 0^\omega$ with $n\in \N$ and $\ell_{n-1}\ne 0$ (if it exists, that is if $n\ne 0$). By Proposition~\ref{Pro : LinkQuasiGreedyQuasiLazy}, we get that 
\[
\qDB(1)=(\ceil{\beta_0}-1-\ell_0)\cdots (\ceil{\beta_{n-1}}-1-\ell_{n-1})(\ceil{\beta_n}-1) (\ceil{\beta_{n+1}}-1)\cdots .\]
However, by~\cite[Proposition 30]{CharlierCisternino2021}, we know that \[\sigma^n(\qDB(1))=(\ceil{\beta_n}-1) (\ceil{\beta_{n+1}}-1)\cdots \le_{\lex} \qDBi{n}(1).\]
Hence, we obtain that $\sigma^n(\qDB(1))=\qDBi{n}(1)$. We conclude that 
\begin{align*}
x_{\B ^{(n)}}&=\val_{\B^{(n)}}(\sigma^n(\qDB(1)))\\
&= \val_{\B^{(n)}}(\qDBi{n}(1)) \\
&=1.\qedhere
\end{align*}
\end{proof}

\begin{corollary}
If the quasi-lazy $\B$-expansion of $x_{\B}-1$ is finite of length $n\in \N$, then $\beta_k\in \N_{\ge2}$ for all $k\ge n$.
\end{corollary}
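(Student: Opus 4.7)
The plan is to deduce this corollary directly from Proposition~\ref{Pro : QuasiLazyFinite} combined with the equivalence recorded just after~\eqref{Eq : EqualitiesXB} at the start of Section~\ref{Sec : Lazy}, namely that for any Cantor base $\B'$, one has $x_{\B'}=1$ if and only if all terms of $\B'$ are integers greater than or equal to $2$. By Proposition~\ref{Pro : QuasiLazyFinite}, the hypothesis that $\qLB(x_{\B}-1)$ is finite of length $n$ gives $x_{\B^{(n)}}=1$. Applying the above equivalence to the shifted Cantor base $\B^{(n)}=(\beta_n,\beta_{n+1},\ldots)$ then yields $\beta_k \in \N_{\ge 2}$ for every $k\ge n$, which is exactly the claim.

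For completeness, the equivalence itself can be justified by induction on $k$ using~\eqref{Eq : EqualitiesXB}. Assuming $x_{\B^{(k)}}=1$, that identity rewrites as $\beta_k = x_{\B^{(k+1)}}+\ceil{\beta_k}-1$. Since $x_{\B^{(k+1)}}\ge 1$ (as the greedy algorithm converges on $[0,1)$, so $x_{\B'} \ge 1$ for every Cantor base $\B'$) and since $\beta_k\le \ceil{\beta_k}$, both inequalities must be equalities, forcing $\beta_k=\ceil{\beta_k}\in\N_{\ge 2}$ together with $x_{\B^{(k+1)}}=1$, which propagates the induction hypothesis to $k+1$.

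There is essentially no obstacle: the corollary is an immediate unpacking of the previous proposition through an already established characterization, so no additional machinery is required.
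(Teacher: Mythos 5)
Your proposal is correct and matches the paper's intent: the corollary is stated without proof precisely because it follows immediately from Proposition~\ref{Pro : QuasiLazyFinite} together with the remark after~\eqref{Eq : EqualitiesXB} that $x_{\B'}=1$ if and only if every term of the Cantor base $\B'$ lies in $\N_{\ge 2}$, applied to the shifted base $\B^{(n)}$. Your inductive verification of that equivalence via $\beta_k = x_{\B^{(k+1)}}+\ceil{\beta_k}-1$, $x_{\B^{(k+1)}}\ge 1$ and $\beta_k\le\ceil{\beta_k}$ is exactly the ``easily proved'' argument the paper has in mind.
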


As the following example shows, the necessary conditions given by the previous proposition and corollary are not sufficient.
\begin{example}
Consider the Cantor base $\B=(\frac43,2,2,2,2,2\cdots)$. We have $x_{\B}=\frac32$ and $x_{\B^{(n)}}=1$ for all $n\ge 1$. However, we have $\qDB(1)=(10)^\omega$ and $\qLB(x_{\B}-1)=(01)^\omega$.
\end{example}

An infinite word in $\prod_{n\in \N}[\![0 , \ceil{\beta_n}-1]\!]$ is said \emph{ultimately maximal} if there exists $N\in \N$ such that for all $n\ge N$, the $(n+1)^\text{st}$ letter of $\qLB(x_{\B}-1)$ is $\ceil{\beta_n}-1$.

\begin{lemma}\label{Lem : UltimatelyMaximal}
The infinite word $\qLB(x_{\B}-1)$ cannot be ultimately maximal. 
\end{lemma}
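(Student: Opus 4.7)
The plan is to use Proposition~\ref{Pro : LinkQuasiGreedyQuasiLazy} to transport the question to one about the quasi-greedy $\B$-expansion of $1$. Since $\theta_{\B}$ acts letterwise by $a_n \mapsto \ceil{\beta_n}-1-a_n$ and $\qLB(x_{\B}-1) = \theta_{\B}(\qDB(1))$, the $(n+1)^{\text{st}}$ letter of $\qLB(x_{\B}-1)$ equals $\ceil{\beta_n}-1$ if and only if the corresponding letter of $\qDB(1)$ is $0$. In particular, $\qLB(x_{\B}-1)$ is ultimately maximal if and only if $\qDB(1)$ is finite, i.e. ends in $0^\omega$. So the lemma reduces to the assertion that $\qDB(1)$ cannot be finite.

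To rule out finiteness of $\qDB(1)$, I would invoke the description given in the remark preceding the lemma: writing $\qDB(1) = t_0 t_1 t_2 \cdots$, for every $n \in \N$ the word $t_0 \cdots t_n 0^\omega$ is the greedy $\B$-expansion $\DB(x_n)$ of some $x_n \in [0,1)$, and $\lim_{n\to+\infty} x_n = 1$. Now suppose for contradiction that $\qDB(1)$ is finite, and pick $N \in \N$ such that $t_n = 0$ for every $n \ge N$. Then $t_0 \cdots t_n 0^\omega$ is literally the same infinite word for every $n \ge N$, so $\DB(x_n) = \DB(x_N)$ for all such $n$. Since $\val_{\B} \circ \DB$ is the identity on $[0,1)$, the greedy expansion determines the real number uniquely, hence $x_n = x_N$ for every $n \ge N$. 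But then $\lim_n x_n = x_N \in [0,1)$ is strictly less than $1$, contradicting $\lim_n x_n = 1$.

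The argument is short; there is no real obstacle beyond a clean application of the flip $\theta_{\B}$ to convert ``ultimately maximal'' into ``ultimately zero'', followed by the observation that $\qDB(1)$ cannot stabilise to zero because it is obtained as the limit of distinct greedy expansions of a sequence in $[0,1)$ converging to $1$. The only external input is the alternative description of $\qDB(1)$ from the preceding remark, itself relying on results of~\cite{CharlierCisternino2021}.
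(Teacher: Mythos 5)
Your proof is correct and follows essentially the same route as the paper: apply Proposition~\ref{Pro : LinkQuasiGreedyQuasiLazy} to translate ``ultimately maximal'' for $\qLB(x_{\B}-1)$ into finiteness of $\qDB(1)$, which is then ruled out. The only difference is that the paper simply cites the known fact that $\qDB(1)$ is infinite, whereas you re-derive it from the remark's description (the words $t_0\cdots t_n0^\omega$ are greedy expansions of $x_n\in[0,1)$ with $x_n\to 1$), and that supplementary argument is valid.
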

\begin{proof}
This is a direct consequence of Proposition~\ref{Pro : LinkQuasiGreedyQuasiLazy} since $\qDB(1)$ is infinite.
\end{proof}

We now prove that  $\qLB(x_{\B}-1)$ is lexicographically smaller than all $\B$-representations of real numbers in $(x_{\B}-1,x_{\B}]$ belonging to $\prod_{n\in \N}[\![0 , \ceil{\beta_n}-1]\!]$.

\begin{proposition}\label{Pro : QuasiLazySmaller}
If $a$ is an infinite word in $\prod_{n\in \N}[\![0 , \ceil{\beta_n}-1]\!]$ such that $\val_{\B}(a)\in (x_{\B}-1,x_{\B}]$, then $a>_{\lex} \qLB(x_{\B}-1)$. 
\end{proposition}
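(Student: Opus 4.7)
The plan is to reduce the claim to its greedy counterpart through the $\theta_{\B}$-map, following the strategy that has been used repeatedly in this section (e.g.\ in Propositions~\ref{Pro : LazyLexMin} and~\ref{Pro : QuasiLazyReprOfXB-1}). Concretely, the greedy counterpart I will rely on is the following: any $\B$-representation in $\prod_{n\in \N}[\![0 , \ceil{\beta_n}-1]\!]$ of a real number in $[0,1)$ is lexicographically strictly smaller than $\qDB(1)$.

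Given $a$ as in the hypothesis, I would set $b = \theta_{\B}(a)$. Since $\theta_{\B}$ is an involution on $\prod_{n\in \N}[\![0 , \ceil{\beta_n}-1]\!]$, the word $b$ lies in this set, and by \eqref{Eq : ValTheta} we have $\val_{\B}(b) = x_{\B} - \val_{\B}(a) \in [0,1)$. Assuming the greedy counterpart above, we obtain $b <_{\lex} \qDB(1)$. Applying the order-reversing property~\eqref{Eq : ThetaDecreasing} together with Proposition~\ref{Pro : LinkQuasiGreedyQuasiLazy} and the fact that $\theta_{\B}$ is self-inverse gives
\[
a = \theta_{\B}(b) >_{\lex} \theta_{\B}(\qDB(1)) = \qLB(x_{\B}-1),
\]
which is the desired strict inequality.

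The main obstacle is therefore establishing the greedy lemma $b <_{\lex} \qDB(1)$. Since greedy $\B$-expansions are lexicographically maximal among $\B$-representations in the alphabet (\cite[Proposition 12]{CharlierCisternino2021}), it suffices to show that $\DB(y) <_{\lex} \qDB(1)$ for every $y \in [0,1)$. For this, I would exploit the strict monotonicity of $\DB$ on $[0,1)$ (\cite[Proposition 13]{CharlierCisternino2021}): for any $y' \in (y,1)$, the first differing position between $\DB(y)$ and $\DB(y')$ carries a strictly larger letter in $\DB(y')$; taking $y'$ close enough to $1$ so that $\DB(y')$ and $\qDB(1)$ agree on a prefix past that position (by the prefix-limit definition~\eqref{Eq : QuasiGreedy}), the inequality transfers from $\DB(y')$ to $\qDB(1)$. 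This lexicographic comparison is also implicit in the greedy admissibility results from~\cite{CharlierCisternino2021} already invoked in this section (cf.\ the use of their Proposition~30 in the proof of Proposition~\ref{Pro : QuasiLazyFinite}), so in a polished version I would simply cite it and let the $\theta_{\B}$-machinery do the rest.
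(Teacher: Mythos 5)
Your proposal is correct and follows essentially the same route as the paper: flip $a$ via $\theta_{\B}$, note $\val_{\B}(\theta_{\B}(a))\in[0,1)$ by~\eqref{Eq : ValTheta}, invoke the greedy maximality of $\qDB(1)$, and conclude through Proposition~\ref{Pro : LinkQuasiGreedyQuasiLazy} and~\eqref{Eq : ThetaDecreasing}. The greedy counterpart you sketch is exactly \cite[Proposition 23]{CharlierCisternino2021}, which the paper simply cites (as you say you would in a polished version), so no re-derivation is needed.
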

\begin{proof}
Let $a$ be an infinite word in $\prod_{n\in \N}[\![0 , \ceil{\beta_n}-1]\!]$ such that $\val_{\B}(a)\in (x_{\B}-1,x_{\B}]$. Then $\theta_{\B}(a)$ is an infinite word over $\prod_{n\in \N}[\![0 , \ceil{\beta_n}-1]\!]$ and by~\eqref{Eq : ValTheta}, we have $\val_{\B}(\theta_{\B}(a))=x_{\B}-\val_{\B}(a)\in [0,1)$. By~\cite[Proposition 23]{CharlierCisternino2021}, we get that $\theta_{\B}(a)<_{\lex} \qDB(1)$. Moreover, by Proposition~\ref{Pro : LinkQuasiGreedyQuasiLazy}, we have $\qDB(1)=\theta_{\B}(\qLB(x_{\B}-1))$. Hence, by~\eqref{Eq : ThetaDecreasing}, we conclude that $a>_{\lex} \qLB(x_{\B}-1)$.
\end{proof}

Note that, similarly to Proposition~\ref{Pro : LazyLexMin}, Proposition~\ref{Pro : QuasiLazySmaller} is weaker than its analogous greedy one~\cite[Proposition 23]{CharlierCisternino2021} since we fix the alphabet of the $\B$-representations. A stronger result cannot be stated as illustrated in the next example. 

\begin{example}
Continuing Examples~\ref{Ex : AlphabetImportant} and~\ref{Ex : AlternateBaseQuasiLazy}, the infinite word $003330^\omega$ is a $\B$-representa\-tion of $8-2\sqrt{13}$. However $003330^\omega <_{\lex} 012(02)^\omega=\qLB(x_{\B}-1)$.
\end{example}

By~\cite[Proposition 23]{CharlierCisternino2021}, the word $\qDB(1)$ is lexicographically maximal among all infinite $\B$-representations of all real numbers in $[0,1]$. 
The following result gives the translation of this property in terms of the lazy representations. 
 
\begin{proposition}
The quasi-lazy $\B$-expansion of  $x_{\B}-1$ is the lexicographically least $\B$-representation of $x_{\B}-1$ in $\prod_{n\in \N}[\![0 , \ceil{\beta_n}-1]\!]$ that is not ultimately maximal.
\end{proposition}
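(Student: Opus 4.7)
The plan is to transfer the lexicographic maximality of $\qDB(1)$ among infinite $\B$-representations of reals in $[0,1]$ (Proposition 23 of~\cite{CharlierCisternino2021}) to a minimality statement for $\qLB(x_{\B}-1)$ by applying the flip map $\theta_{\B}$, using Proposition~\ref{Pro : LinkQuasiGreedyQuasiLazy}.

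First, I would check that $\qLB(x_{\B}-1)$ is itself an admissible candidate: it is a $\B$-representation of $x_{\B}-1$ by Proposition~\ref{Pro : QuasiLazyReprOfXB-1}, it lies in $\prod_{n\in \N}[\![0 , \ceil{\beta_n}-1]\!]$ by construction of the limit defining it, and it is not ultimately maximal by Lemma~\ref{Lem : UltimatelyMaximal}.

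For the minimality, let $a\in\prod_{n\in \N}[\![0 , \ceil{\beta_n}-1]\!]$ be any $\B$-representation of $x_{\B}-1$ which is not ultimately maximal. Then $\theta_{\B}(a)\in\prod_{n\in \N}[\![0 , \ceil{\beta_n}-1]\!]$ and, by~\eqref{Eq : ValTheta}, satisfies $\val_{\B}(\theta_{\B}(a))=x_{\B}-(x_{\B}-1)=1$. The crucial observation is that $a$ is ultimately maximal if and only if $\theta_{\B}(a)$ is ultimately zero, i.e., finite. Hence the assumption that $a$ is not ultimately maximal translates exactly into the assertion that $\theta_{\B}(a)$ is an \emph{infinite} $\B$-representation of $1$. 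By~\cite[Proposition 23]{CharlierCisternino2021}, we then obtain $\theta_{\B}(a)\le_{\lex}\qDB(1)$. Applying~\eqref{Eq : ThetaDecreasing} together with Proposition~\ref{Pro : LinkQuasiGreedyQuasiLazy} yields
\[
a=\theta_{\B}(\theta_{\B}(a))\ge_{\lex}\theta_{\B}(\qDB(1))=\qLB(x_{\B}-1),
\]
which is the desired inequality.

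The only step that requires care is the translation between ``ultimately maximal'' and ``finite'' under $\theta_{\B}$; this is the main (and only real) point to verify, and it follows directly from the definition of $\theta_{\B}$ since replacing $a_n$ by $\ceil{\beta_n}-1-a_n$ sends the tail $a_n=\ceil{\beta_n}-1$ for $n\ge N$ to the tail $a_n=0$ for $n\ge N$. Everything else is a mechanical application of~\eqref{Eq : ValTheta}, \eqref{Eq : ThetaDecreasing}, Proposition~\ref{Pro : LinkQuasiGreedyQuasiLazy}, and~\cite[Proposition 23]{CharlierCisternino2021}.
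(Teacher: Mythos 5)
Your proof is correct and follows essentially the same route as the paper's: transfer via the involution $\theta_{\B}$, invoke~\cite[Proposition 23]{CharlierCisternino2021}, and use the equivalence between ``$a$ ultimately maximal'' and ``$\theta_{\B}(a)$ finite''. The only difference is cosmetic—you argue the minimality directly for a non-ultimately-maximal $a$, whereas the paper argues by contraposition from $a<_{\lex}\qLB(x_{\B}-1)$.
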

\begin{proof}
By Proposition~\ref{Pro : QuasiLazyReprOfXB-1} and Lemma~\ref{Lem : UltimatelyMaximal}, the quasi-lazy $\B$-expansion of  $x_{\B}-1$ is a $\B$-representation of $x_{\B}-1$ in $\prod_{n\in \N}[\![0 , \ceil{\beta_n}-1]\!]$ which is not ultimately maximal. 
Moreover, let $a$ be an infinite word in $\prod_{n\in \N}[\![0 , \ceil{\beta_n}-1]\!]$ such that $\val_{\B}(a)=x_{\B}-1$ and suppose that $a<_{\lex} \qLB(x_{\B}-1)$. As above, we get $\theta_{\B}(a) >_{\lex} \qDB(1)$ with $\val_{\B}(\theta_{\B}(a))=1$. By~\cite[Proposition 23]{CharlierCisternino2021}, the word $\theta_{\B}(a)$ must be a finite $\B$-representation of $1$. By setting $N$ to the length of the longest prefix of $\theta_{\B}(a)$ not ending with $0$, we get $a_n=\ceil{\beta_n}-1$ for all $n\ge N$, that is $a$ is ultimately maximal in $\prod_{n\in \N}[\![0 , \ceil{\beta_n}-1]\!]$.
\end{proof}

\section{Admissible sequences}\label{Sec : Admissible}

We let $D'_{\B}$ denote the subset of $A_{\B}^\N$ of all lazy $\B$-expansions of real numbers in the interval $(x_{\B}-1,x_{\B}]$ and let $S'_{\B}$ denote the topological closure of $D'_{\B}$ with respect to the prefix distance of infinite words:
\[
	D'_{\B}=\{\LB(x)\colon x\in(x_{\B}-1,x_{\B}]\} \quad \mathrm{and}\quad S'_{\B}=\overline{D'_{\B}}.
\]  

The following result links the sets $D'_{\B}$ and $S'_{\B}$ with their analogous greedy ones $D_{\B}=\{\DB(x)\colon x\in [0,1)\}$ and $S_{\B}=\overline{D_{\B}}$ from~\cite{CharlierCisternino2021}. 

\begin{proposition}\label{Pro : ThetaDBSB}
The maps $\restr{\theta_{\B}}{D_{\B}}\colon D_{\B} \to D'_{\B}$ and $\restr{\theta_{\B}}{S_{\B}}\colon S_{\B} \to S'_{\B}$ are both bijective.
\end{proposition}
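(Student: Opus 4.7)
The strategy is to reduce both statements to properties of $\theta_{\B}$ that are already available: it is an involution on $\prod_{n\in\N}[\![0,\ceil{\beta_n}-1]\!]$, it is continuous with respect to the prefix distance, and its action on greedy expansions is governed by Proposition~\ref{Pro : LinkGreedyLazy}. I would handle the two maps separately, doing $D_{\B}$ first and then bootstrapping to $S_{\B}$.

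For the first map, I would start by noting that $D_{\B}\subseteq \prod_{n\in\N}[\![0,\ceil{\beta_n}-1]\!]$ by definition of the greedy algorithm, so $\theta_{\B}$ is defined on $D_{\B}$. Proposition~\ref{Pro : LinkGreedyLazy} gives $\theta_{\B}(\DB(x))=\LB(x_{\B}-x)$ for every $x\in [0,1)$, which shows both that the image lands in $D'_{\B}$ and that every $\LB(y)$ with $y\in(x_{\B}-1,x_{\B}]$ is hit (just take $x=x_{\B}-y\in[0,1)$). Injectivity is immediate since $\theta_{\B}$ is a bijection on $\prod_{n\in\N}[\![0,\ceil{\beta_n}-1]\!]$ (it is its own inverse). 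This establishes that $\restr{\theta_{\B}}{D_{\B}}\colon D_{\B}\to D'_{\B}$ is a bijection.

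For the second map, the key observation is that $\theta_{\B}$ is a homeomorphism of $\prod_{n\in\N}[\![0,\ceil{\beta_n}-1]\!]$ onto itself, since it is continuous, self-inverse, and therefore its inverse is also continuous. Because $\prod_{n\in\N}[\![0,\ceil{\beta_n}-1]\!]$ is a closed subset of $A_{\B}^{\N}$, the closures $S_{\B}=\overline{D_{\B}}$ and $S'_{\B}=\overline{D'_{\B}}$ both sit inside it. Hence we can apply the general fact that a homeomorphism commutes with closure:
\[
\theta_{\B}(S_{\B})=\theta_{\B}\bigl(\overline{D_{\B}}\bigr)=\overline{\theta_{\B}(D_{\B})}=\overline{D'_{\B}}=S'_{\B}.
\]
Combined with injectivity of $\theta_{\B}$ on $\prod_{n\in\N}[\![0,\ceil{\beta_n}-1]\!]$, this yields that $\restr{\theta_{\B}}{S_{\B}}\colon S_{\B}\to S'_{\B}$ is bijective as well.

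The only minor subtlety, and the one step I would verify carefully, is that $\prod_{n\in\N}[\![0,\ceil{\beta_n}-1]\!]$ is indeed closed in $A_{\B}^{\N}$ (so that taking the closure of $D_{\B}$ in $A_{\B}^{\N}$ does not escape the domain of $\theta_{\B}$) and that closure under a homeomorphism behaves as used above. Both are standard: the coordinate-wise constraints defining the product are each closed, and for any continuous map $f$ and any set $X$, $f(\overline{X})\subseteq \overline{f(X)}$, with equality when $f$ is a closed map, hence in particular when $f$ is a homeomorphism onto its image. I do not expect a substantive obstacle beyond recording these topological facts cleanly.
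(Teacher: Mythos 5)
Your argument is correct and follows the same route as the paper: Proposition~\ref{Pro : LinkGreedyLazy} gives well-definedness and surjectivity on $D_{\B}$, continuity of the involution $\theta_{\B}$ transfers this to the closures $S_{\B}$ and $S'_{\B}$, and injectivity is inherited from $\theta_{\B}$ being a bijection. Your explicit homeomorphism-commutes-with-closure step is just a spelled-out version of the paper's ``by continuity'' sentence, so there is no substantive difference.
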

\begin{proof}
By Proposition~\ref{Pro : LinkGreedyLazy}, the map $\restr{\theta_{\B}}{D_{\B}}$ is well defined and surjective. 
Hence, by continuity of the map $\theta_{\B}$, the map $\restr{\theta_{\B}}{S_{\B}}$ is also well defined and surjective. 
Moreover, since the map $\theta_{\B}$ is injective, so are the maps $\restr{\theta_{\B}}{D_{\B}}$ and $\restr{\theta_{\B}}{S_{\B}}$. 
\end{proof}

Note that, in the particular case of alternate bases, Proposition~\ref{Pro : ThetaDBSB} can be deduced from~\cite[Remark 6.3]{CharlierCisterninoDajani2021}. 

\begin{proposition}
Let $a,b \in S'_{\B}$.
\begin{enumerate}
\item If $a<_{\lex} b$ then $\val_{\B}(a)\leq\val_{\B}(b)$.
\item If $\val_{\B}(a)<\val_{\B}(b)$ then $a<_{\lex} b$.
\end{enumerate}
\end{proposition}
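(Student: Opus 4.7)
The plan is to transport both statements across the bijection $\theta_{\B}$ established in Proposition~\ref{Pro : ThetaDBSB} and then invoke the analogous result for the greedy side that appears in~\cite{CharlierCisternino2021} (namely, the fact that for $a', b' \in S_{\B}$, lex order and $\val_{\B}$ are compatible in the usual sense: strict lex inequality implies weak value inequality, and strict value inequality implies strict lex inequality). The whole argument is just a sign flip, using the two fundamental properties of $\theta_{\B}$ recorded in Section~\ref{Sec : Flip}: $\theta_{\B}$ is lexicographically decreasing~\eqref{Eq : ThetaDecreasing} and $\val_{\B}(\theta_{\B}(a)) = x_{\B} - \val_{\B}(a)$~\eqref{Eq : ValTheta}.

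More concretely, given $a, b \in S'_{\B}$, I would set $a' = \theta_{\B}(a)$ and $b' = \theta_{\B}(b)$. By Proposition~\ref{Pro : ThetaDBSB}, both $a'$ and $b'$ lie in $S_{\B}$, so the greedy analogue of the proposition applies to them. For part (1), assume $a <_{\lex} b$; then~\eqref{Eq : ThetaDecreasing} yields $a' >_{\lex} b'$, hence by the greedy analogue $\val_{\B}(a') \ge \val_{\B}(b')$, and applying~\eqref{Eq : ValTheta} gives $x_{\B} - \val_{\B}(a) \ge x_{\B} - \val_{\B}(b)$, i.e., $\val_{\B}(a) \le \val_{\B}(b)$. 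For part (2), assume $\val_{\B}(a) < \val_{\B}(b)$; by~\eqref{Eq : ValTheta} this means $\val_{\B}(a') > \val_{\B}(b')$, so by the greedy analogue $a' >_{\lex} b'$, and~\eqref{Eq : ThetaDecreasing} then gives $a <_{\lex} b$.

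The only real obstacle is locating and citing the precise greedy analogue in~\cite{CharlierCisternino2021}; once that reference is in hand the proof is essentially a two-line translation, matching the style already established in Section~\ref{Sec : FirstPropertiesLazy} (cf. the proofs of Propositions~\ref{Pro : LazyLexMin} and~\ref{pro:Increasing}, which follow exactly the same template). No further technical ingredients are needed, and there is no need to appeal to continuity of $\theta_{\B}$ beyond what Proposition~\ref{Pro : ThetaDBSB} has already packaged: the bijectivity of $\restr{\theta_{\B}}{S_{\B}}$ is what lets us move freely between $S'_{\B}$ and $S_{\B}$.
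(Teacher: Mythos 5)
Your proposal is correct and follows essentially the same route as the paper: transport $a,b$ into $S_{\B}$ via Proposition~\ref{Pro : ThetaDBSB}, apply the greedy analogue (Proposition~31 of the cited work) together with~\eqref{Eq : ThetaDecreasing} and~\eqref{Eq : ValTheta}, and flip back. The only cosmetic difference is that the paper derives item~(2) as an immediate consequence of item~(1), whereas you invoke the greedy statement's second part directly; both are equivalent.
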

\begin{proof}
Suppose that $a,b \in S'_{\B}$ are such that $a<_{\lex} b$. By Proposition~\ref{Pro : ThetaDBSB} and~\eqref{Eq : ThetaDecreasing}, we have $\theta_{\B}(a),\theta_{\B}(b)\in S_{\B}$ and $\theta_{\B}(a)>_{\lex} \theta_{\B}(b)$. By~\cite[Proposition 31]{CharlierCisternino2021}, we $\val_{\B}(\theta_{\B}(a))\ge \val_{\B}(\theta_{\B}(b))$. We conclude the proof of the first item by~\eqref{Eq : ValTheta}. The second item immediately follows.
\end{proof}

We are now able to state a Parry-like theorem for Cantor real bases in the lazy framework. 

\begin{theorem}\label{Thm : LazyParry}
Let $a$ be an infinite word over $\N$.
\begin{enumerate}
\item The word $a$ belongs to $D'_{\B}$ if and only if $a\in \prod_{n\in \N}[\![0 , \ceil{\beta_n}-1]\!]$ and for all $n\in \N$, 
\begin{align*}
\sigma^n(a)>_{\lex} \qLBi{n}(x_{\B^{(n)}}-1).
\end{align*}
\item The word $a$ belongs to $S'_{\B}$ if and only if $a\in \prod_{n\in \N}[\![0 , \ceil{\beta_n}-1]\!]$ and for all $n\in \N$, \begin{align*}
\sigma^n(a)\ge_{\lex} \qLBi{n}(x_{\B^{(n)}}-1).
\end{align*}
\end{enumerate}
\end{theorem}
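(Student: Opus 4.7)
The plan is to reduce both items to the corresponding greedy Parry-like criterion from~\cite{CharlierCisternino2021} via the flip map $\theta_{\B}$. By Proposition~\ref{Pro : ThetaDBSB}, the restriction $\restr{\theta_{\B}}{D_{\B}}\colon D_{\B}\to D'_{\B}$ is a bijection and, since $\theta_{\B}$ is an involution of $\prod_{n\in\N}[\![0,\ceil{\beta_n}-1]\!]$, a word $a$ in this product alphabet lies in $D'_{\B}$ if and only if $\theta_{\B}(a)\in D_{\B}$. The same equivalence holds with $D$ replaced by $S$. Thus the proof of item (1) reduces to rewriting the greedy admissibility condition on $\theta_{\B}(a)$ in terms of $a$, and item (2) will follow from the identical argument after replacing each strict inequality with a weak one.

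To carry out this rewriting, I would fix $a\in\prod_{n\in\N}[\![0,\ceil{\beta_n}-1]\!]$ and, for each $n\in\N$, combine three ingredients. First, Lemma~\ref{Lem : CommuteShiftTheta} gives
\[
\sigma^n(\theta_{\B}(a)) = \theta_{\B^{(n)}}(\sigma^n(a)).
\]
Second, applying Proposition~\ref{Pro : LinkQuasiGreedyQuasiLazy} to the shifted Cantor base $\B^{(n)}$ (the proposition is stated for $\B$ but the proof depends on nothing specific to it) yields
\[
\qDBi{n}(1) = \theta_{\B^{(n)}}\bigl(\qLBi{n}(x_{\B^{(n)}}-1)\bigr).
\]
Third, by~\eqref{Eq : ThetaDecreasing} applied to the base $\B^{(n)}$, the map $\theta_{\B^{(n)}}$ reverses the lexicographic order. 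Putting these together, the greedy admissibility inequality $\sigma^n(\theta_{\B}(a)) <_{\lex} \qDBi{n}(1)$ becomes $\theta_{\B^{(n)}}(\sigma^n(a)) <_{\lex} \theta_{\B^{(n)}}(\qLBi{n}(x_{\B^{(n)}}-1))$, which is equivalent to $\sigma^n(a) >_{\lex} \qLBi{n}(x_{\B^{(n)}}-1)$. Item (1) then follows directly from the greedy characterisation of $D_{\B}$ in~\cite{CharlierCisternino2021}, and item (2) follows from the analogous characterisation of $S_{\B}$ by the same chain of equivalences, with $<_{\lex}$ replaced by $\le_{\lex}$ throughout.

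I do not foresee a serious obstacle, as the whole argument is a dictionary translation through $\theta_{\B}$. The only point requiring attention is the systematic use of the shifted base $\B^{(n)}$ rather than $\B$: Lemma~\ref{Lem : CommuteShiftTheta} is precisely what is needed to move $\sigma^n$ past $\theta_{\B}$, Proposition~\ref{Pro : LinkQuasiGreedyQuasiLazy} has to be invoked once for each $n$ at $\B^{(n)}$, and~\eqref{Eq : ThetaDecreasing} is valid for every Cantor base and in particular for $\B^{(n)}$. Beyond this bookkeeping, the proof is immediate.
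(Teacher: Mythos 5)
Your proposal is correct and follows essentially the same route as the paper: reduce to the greedy criterion of~\cite{CharlierCisternino2021} via $\theta_{\B}$ (using Proposition~\ref{Pro : ThetaDBSB}), commute the shift past the flip with Lemma~\ref{Lem : CommuteShiftTheta}, apply Proposition~\ref{Pro : LinkQuasiGreedyQuasiLazy} at each shifted base $\B^{(n)}$, and conclude with the order reversal~\eqref{Eq : ThetaDecreasing}, handling $S'_{\B}$ by the same argument with weak inequalities.
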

\begin{proof}
Let $a$ be an infinite word.
We have $a\in D'_{\B}$ if and only if  $a\in \prod_{n\in \N}[\![0 , \ceil{\beta_n}-1]\!]$ and $\theta_{\B}(a)\in D_{\B}$. 
Moreover, by~\cite[Theorem 26]{CharlierCisternino2021}, we have $\theta_{\B}(a)\in D_{\B}$ if and only if $\sigma^n(\theta_{\B}(a))<_{\lex} \qDBi{n}(1)$ for all $n\in \N $. 
However, for all $n\in \N$, by Lemma~\ref{Lem : CommuteShiftTheta}, we have $ \sigma^n(\theta_{\B}(a))=\theta_{\B^{(n)}}(\sigma^n(a))$ and by Proposition~\ref{Pro : LinkQuasiGreedyQuasiLazy}, we have $\qDBi{n}(1)= \theta_{\B^{(n)}}(\qLBi{n}(x_{\B^{(n)}}-1))$. 
Hence, the first item follows from~\eqref{Eq : ThetaDecreasing}.
The second item can be proved in a similar fashion by using~\cite[Proposition 30]{CharlierCisternino2021}.
\end{proof}

\begin{example}
Consider $\B=(\overline{\frac{1+ \sqrt{13}}{2},\frac{5+ \sqrt{13}}{6}})$ from Example~\ref{Ex : AlternateBase}. In view of Example~\ref{Ex : AlternateBaseQuasiLazy}, the sequence $a=(2120)^\omega$ belongs to $D'_{\B}$.
\end{example}

Note that in Theorem~\ref{Thm : LazyParry}, the hypothesis that $a$ belongs to $\prod_{n\in \N}[\![0 , \ceil{\beta_n}-1]\!]$ is required. For otherwise, any sequence $a$ such that $a_n>\ceil{\beta_n}-1$ for all $n\in \N$ would belong to $D_{\B}'$.

As a consequence of Theorem~\ref{Thm : LazyParry}, we can characterize the set $D'_{\B}$ by translating~\cite[Proposition 34 and Corollaries 35 and 36]{CharlierCisternino2021} to the lazy framework. 
To do so, we define sets of finite words $X'_{\B,n}$ for $n\in\N_{\ge 1}$ as follows. If $\qLB(x_{\B}-1)=\ell_0\ell_1\cdots$ then, for all $n\in \N_{\ge1}$, we let
\[
	X'_{\B,n}=\{\ell_0\cdots \ell_{n-2}s \colon s\in[\![\ell_{n-1}+1, \ceil{\beta_{n-1}}-1]\!]\}.
\]
Note that $X'_{\B,n}$ is empty if and only if $\ell_{n-1}=\ceil{\beta_{n-1}}-1$.

\begin{proposition}
\label{Pro : DX} 
We have
\[
	D'_{\B}=\bigcup_{n_0\in\N_{\ge 1}} X'_{\B,n_0}
		\Bigg(\bigcup_{n_1\in\N_{\ge 1}} X'_{\B^{(n_0)},n_1}
		\Bigg(\bigcup_{n_2\in\N_{\ge 1}} X'_{\B^{(n_0+n_1)},n_2}
		\Bigg(
		\quad\cdots\quad
		\Bigg)
		\Bigg)
		\Bigg).
\] 
Therefore, we have $D'_{\B}=\displaystyle{\bigcup_{n\in\N_{\ge 1}} X'_{\B,n} D'_{\B^{(n)}}}$ and any prefix of $\qLB(x_{\B}-1)$ belongs to $\Pref(D'_{\B})$.
\end{proposition}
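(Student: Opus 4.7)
The plan is to first establish the one-step decomposition $D'_{\B}=\bigcup_{n\in\N_{\ge 1}} X'_{\B,n} D'_{\B^{(n)}}$, then iterate it to obtain the nested-union formula, and finally deduce the prefix statement from Lemma~\ref{Lem : UltimatelyMaximal}. The working characterization throughout will be Theorem~\ref{Thm : LazyParry}: a word $a$ lies in $D'_{\B}$ precisely when $a\in\prod_{n\in\N}[\![0,\ceil{\beta_n}-1]\!]$ and $\sigma^n(a)>_{\lex}\qLBi{n}(x_{\B^{(n)}}-1)$ for every $n\in\N$.

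For the inclusion $D'_{\B}\subseteq\bigcup_n X'_{\B,n} D'_{\B^{(n)}}$, given $a\in D'_{\B}$ and $\qLB(x_{\B}-1)=\ell_0\ell_1\cdots$, the strict inequality $a>_{\lex}\qLB(x_{\B}-1)$ forces a smallest index $n_0\ge 1$ with $a_{n_0-1}\neq\ell_{n_0-1}$, and necessarily $a_{n_0-1}>\ell_{n_0-1}$; since $a_{n_0-1}\le\ceil{\beta_{n_0-1}}-1$, the prefix $a_0\cdots a_{n_0-1}$ lies in $X'_{\B,n_0}$. The alphabet restriction on $\sigma^{n_0}(a)$ is inherited, and the lex conditions of Theorem~\ref{Thm : LazyParry} for $\sigma^{n_0}(a)$ relative to $\B^{(n_0)}$ reproduce the lex conditions for $a$ at indices $n\ge n_0$, so $\sigma^{n_0}(a)\in D'_{\B^{(n_0)}}$.

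For the reverse inclusion, write $a=wb$ with $w\in X'_{\B,n_0}$ and $b\in D'_{\B^{(n_0)}}$ and verify the conditions of Theorem~\ref{Thm : LazyParry}. Membership in $\prod_{n\in\N}[\![0,\ceil{\beta_n}-1]\!]$ is immediate. At indices $n\ge n_0$ the required inequality follows at once from Theorem~\ref{Thm : LazyParry} applied to $b\in D'_{\B^{(n_0)}}$. The delicate case is $0\le n<n_0$: here $\sigma^n(a)$ starts with $\ell_n\cdots\ell_{n_0-2}s$ with $s>\ell_{n_0-1}$, which gives $\sigma^n(a)>_{\lex}\sigma^n(\qLB(x_{\B}-1))$, but one still needs the auxiliary inequality
\begin{equation*}
\sigma^n(\qLB(x_{\B}-1))\ge_{\lex}\qLBi{n}(x_{\B^{(n)}}-1).
\end{equation*}
This is the lazy counterpart of the greedy fact $\sigma^n(\qDB(1))\le_{\lex}\qDBi{n}(1)$ (Proposition~30 of~\cite{CharlierCisternino2021}) and is obtained by applying $\theta_{\B^{(n)}}$ to the greedy inequality, using Lemma~\ref{Lem : CommuteShiftTheta} to rewrite $\sigma^n\circ\theta_{\B}=\theta_{\B^{(n)}}\circ\sigma^n$, Proposition~\ref{Pro : LinkQuasiGreedyQuasiLazy} to translate quasi-greedy to quasi-lazy, and~\eqref{Eq : ThetaDecreasing} to reverse the direction.

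With the one-step decomposition in hand, the nested-union formula follows by iteration. For the prefix claim, fix $N\in\N$; Lemma~\ref{Lem : UltimatelyMaximal} supplies some $m\ge N$ with $\ell_m<\ceil{\beta_m}-1$, so $\ell_0\cdots\ell_{m-1}(\ell_m+1)\in X'_{\B,m+1}$, and concatenating with any element of the nonempty set $D'_{\B^{(m+1)}}$ yields an element of $D'_{\B}$ whose length-$N$ prefix is exactly $\ell_0\cdots\ell_{N-1}$. The main obstacle is the auxiliary lex-inequality in the reverse inclusion; everything else is a repackaging of Theorem~\ref{Thm : LazyParry} together with routine bookkeeping.
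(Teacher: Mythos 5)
Your proof is correct, but it takes a genuinely different route from the paper. The paper disposes of this proposition in one line: by Proposition~\ref{Pro : ThetaDBSB} the flip $\theta_{\B}$ is a bijection from $D_{\B}$ onto $D'_{\B}$, and by Proposition~\ref{Pro : LinkQuasiGreedyQuasiLazy} it sends the greedy blocks $X_{\B,n}$ onto the lazy blocks $X'_{\B,n}$ (letterwise $w_k\mapsto\ceil{\beta_k}-1-w_k$), so the statement is simply the image under $\theta_{\B}$ of the greedy decomposition of \cite[Proposition 34]{CharlierCisternino2021}. You instead reprove the decomposition from scratch out of the lazy Parry criterion (Theorem~\ref{Thm : LazyParry}); the extra ingredient you correctly isolate is the inequality $\sigma^n(\qLB(x_{\B}-1))\ge_{\lex}\qLBi{n}(x_{\B^{(n)}}-1)$, which the paper never needs to state because it is hidden inside the greedy result being flipped (note it also follows at once from Theorem~\ref{Thm : LazyParry}(2), since $\qLB(x_{\B}-1)$ is a limit of elements of $D'_{\B}$ and hence lies in $S'_{\B}$). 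Your route buys independence from the external greedy proposition; the paper's buys brevity and uniformity with the rest of its ``flip'' strategy.

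One step is under-justified: ``the nested-union formula follows by iteration.'' Iterating the one-step identity does show that every element of $D'_{\B}$ admits the infinite block factorization, but the converse inclusion --- that an arbitrary infinite concatenation of admissible blocks lies in $D'_{\B}$ --- does not follow formally by iteration: your one-step reverse inclusion presupposes that the tail already lies in some $D'_{\B^{(n)}}$, and the tail of an infinite concatenation is again an infinite concatenation, so the reduction never terminates. The repair uses exactly the tools you already have: writing $N_k=n_0+\cdots+n_{k-1}$ for the start of the $k$-th block, any position $n$ with $N_k\le n<N_{k+1}$ satisfies that $\sigma^n(a)$ agrees with $\sigma^{n-N_k}\big(\qLBi{N_k}(x_{\B^{(N_k)}}-1)\big)$ up to the end of that block and then strictly exceeds it, so $\sigma^n(a)>_{\lex}\sigma^{n-N_k}\big(\qLBi{N_k}(x_{\B^{(N_k)}}-1)\big)\ge_{\lex}\qLBi{n}(x_{\B^{(n)}}-1)$ by your auxiliary inequality, and Theorem~\ref{Thm : LazyParry}(1) applies directly to $a$. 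With that patch, and your prefix argument via Lemma~\ref{Lem : UltimatelyMaximal} (which is fine, since $D'_{\B^{(m+1)}}$ is nonempty because $x_{\B^{(m+1)}}<+\infty$), the proof is complete.
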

\begin{proof}
This follows from Propositions~\ref{Pro : LinkQuasiGreedyQuasiLazy},~\ref{Pro : ThetaDBSB} and~\cite[Proposition 34]{CharlierCisternino2021} since $w_0w_1\cdots w_{n-1}\in X'_{\B,n}$ if and only if  $(\ceil{\beta_0}-1-w_0)(\ceil{\beta_1}-1-w_1)\cdots (\ceil{\beta_{n-1}}-1-w_{n-1})\in X_{\B,n}$.
\end{proof}

As in the greedy case, for lazy expansions in alternate bases, Proposition~\ref{Pro : DX} can be straightened as follows. 
Consider an alternate base $\B$ of length $p$.
We define sets of finite words $Y'_{\B,h}$ for $h\in\Int$ as follows. If $\qLB(x_{\B}-1)=\ell_0\ell_1\cdots$ then, for all $h\in\Int$, we let
\[
	Y'_{\B,h}=\{\ell_0\cdots \ell_{n-2}s \colon 
	n\in\N_{\ge 1},\
	n\bmod p= h,\
	s\in[\![\ell_{n-1}+1, \ceil{\beta_{n-1}}-1]\!]
	\}.
\]
Note that $Y'_{\B,h}$ is empty if and only if for all $n\in\N_{\ge 1}$ such that $n\bmod p= h$, $\ell_{n-1}=\ceil{\beta_{n-1}}-1$. Moreover, unlike the sets $X'_{\B,n}$ defined above, the sets $Y'_{\B,h}$ can be infinite. 

\begin{proposition}
\label{Pro : DY} 
Let $\B$ be an alternate base of length $p$. We have 
\[
	D'_{\B}=\bigcup_{h_0=0}^{p-1} Y'_{\B,h_0}
		\Bigg(\bigcup_{h_1=0}^{p-1} Y'_{\B^{(h_0)},h_1}
		\Bigg(\bigcup_{h_2=0}^{p-1} Y'_{\B^{(h_0+h_1)},h_2}
		\Bigg(
		\quad\cdots\quad
		\Bigg)
		\Bigg)
		\Bigg).
\]
Therefore, we have $D'_{\B}=\displaystyle{\bigcup_{h=0}^{p-1} Y'_{\B,h} D'_{\B^{(h)}}}$.
\end{proposition}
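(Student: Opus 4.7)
The plan is to reduce Proposition~\ref{Pro : DY} to its greedy analogue in the alternate base setting from~\cite{CharlierCisternino2021} by means of the flip map $\theta_{\B}$, following the same strategy used to prove Proposition~\ref{Pro : DX}.

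First, I would establish the letter-wise correspondence between the sets $Y'_{\B,h}$ and the analogous greedy sets $Y_{\B,h}$ defined in~\cite{CharlierCisternino2021} with respect to $\qDB(1)$. Writing $\qDB(1)=t_0t_1\cdots$, Proposition~\ref{Pro : LinkQuasiGreedyQuasiLazy} yields $t_i=\ceil{\beta_i}-1-\ell_i$ for all $i\in\N$. Hence a finite word $\ell_0\cdots \ell_{n-2}s$ with $s\in[\![\ell_{n-1}+1,\ceil{\beta_{n-1}}-1]\!]$ is sent by the letter-wise flip to the word $t_0\cdots t_{n-2}s'$ with $s'=\ceil{\beta_{n-1}}-1-s\in[\![0,t_{n-1}-1]\!]$, which is exactly the defining condition of $Y_{\B,h}$ when $n\bmod p=h$. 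Therefore $w\in Y'_{\B,h}$ if and only if its flip belongs to $Y_{\B,h}$.

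Next, I would record the concatenation identity $\theta_{\B}(wa)=\theta_{\B}(w)\,\theta_{\B^{(n)}}(a)$ for any finite word $w$ of length $n$ and any infinite word $a$, which follows directly from the definition of $\theta$ since its $i$-th coordinate depends only on $\beta_i$. Combined with the bijection $\restr{\theta_{\B}}{D_{\B}}\colon D_{\B}\to D'_{\B}$ from Proposition~\ref{Pro : ThetaDBSB} and with the $p$-periodicity of $\B$ (so that $\B^{(h_0+\cdots+h_k)}$ depends only on $(h_0+\cdots+h_k)\bmod p$), the greedy identity for $D_{\B}$ from~\cite{CharlierCisternino2021} transports verbatim to the claimed identity for $D'_{\B}$. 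The second assertion $D'_{\B}=\bigcup_{h=0}^{p-1} Y'_{\B,h}\, D'_{\B^{(h)}}$ then follows by peeling off the first level of the nested union.

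The main obstacle is bookkeeping rather than substance: checking that the flip $\theta_{\B}$ intertwines correctly with the nested unions when the suffixes live in shifted bases, which crucially relies on the concatenation identity above. Once this compatibility is in hand, the rest of the argument is entirely parallel to the proof of Proposition~\ref{Pro : DX}, using the alternate-base refinement instead of the general Cantor-base version.
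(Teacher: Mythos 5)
Your proposal is correct and follows exactly the route the paper intends: the paper states Proposition~\ref{Pro : DY} without a written proof, treating it as the alternate-base analogue of Proposition~\ref{Pro : DX}, obtained by the same flip argument you give (letter-wise correspondence $Y'_{\B,h}\leftrightarrow$ greedy $Y$-sets via $\theta$, the bijections of Proposition~\ref{Pro : ThetaDBSB}, and $p$-periodicity so that $\B^{(n)}$ depends only on $n\bmod p$). Your bookkeeping of the concatenation identity for $\theta$ and of the shifted bases is exactly the compatibility that makes this translation work, so nothing is missing.
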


\section{The lazy $\B$-shift}\label{Sec : LazyShift}

This section is concerned with the study of the lazy $\B$-shift.
First, let us define
\[
	\Delta'_{\B}=\bigcup_{n\in\N}D'_{\B^{(n)}}
	\quad\text{and}\quad
	 \Sigma'_{\B}=\overline{\Delta'_{\B}}.
\] 
By Proposition~\ref{Pro : ThetaDBSB}, we get 
\begin{equation}
\Delta'_{\B}=\bigcup_{n\in \N}\theta_{\B^{(n)}}(D_{\B^{(n)}}).\label{Eq : DeltaUnionTheta}
\end{equation}

\begin{proposition}
\label{Pro : Subshift}
The sets $\Delta'_{\B}$ and $\Sigma'_{\B}$ are both shift-invariant. 
\end{proposition}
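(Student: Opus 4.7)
The plan is to show the stronger statement $\sigma(\Delta'_{\B}) \subseteq \Delta'_{\B}$ first, and then to deduce shift-invariance of $\Sigma'_{\B}$ from the continuity of $\sigma$ with respect to the prefix distance. Both halves reduce to tools already available in Section~\ref{Sec : FirstPropertiesLazy}, so the argument should be short.

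For the first inclusion, I would pick an arbitrary $a \in \Delta'_{\B}$ and use the definition to write $a = \LBi{n}(x)$ for some $n \in \N$ and some $x \in (x_{\B^{(n)}}-1, x_{\B^{(n)}}]$. Proposition~\ref{Pro : CommuteShiftThetaLB}, applied to the Cantor base $\B^{(n)}$, then gives $\sigma(a) = \LBi{n+1}(s_{\B^{(n)},0}(x))$, so the only point to check is that $s_{\B^{(n)},0}(x)$ lies in the domain $(x_{\B^{(n+1)}}-1, x_{\B^{(n+1)}}]$ of the map $\LBi{n+1}$. Setting $y = \beta_n x - x_{\B^{(n+1)}}$, the definition of the lazy algorithm gives $\xi_{\B^{(n)},0}(x) = \ceil{y}$ and hence
\[
s_{\B^{(n)},0}(x) \;=\; \beta_n x - \ceil{y} \;=\; x_{\B^{(n+1)}} + \bigl(y - \ceil{y}\bigr),
\]
which lies in $(x_{\B^{(n+1)}}-1, x_{\B^{(n+1)}}]$ since $y - \ceil{y} \in (-1, 0]$. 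Thus $\sigma(a) \in D'_{\B^{(n+1)}} \subseteq \Delta'_{\B}$.

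For $\Sigma'_{\B}$, continuity of $\sigma$ for the prefix distance gives
\[
\sigma(\Sigma'_{\B}) \;=\; \sigma\bigl(\overline{\Delta'_{\B}}\bigr) \;\subseteq\; \overline{\sigma(\Delta'_{\B})} \;\subseteq\; \overline{\Delta'_{\B}} \;=\; \Sigma'_{\B},
\]
where the last inclusion is the content of the first part of the proof. No serious obstacle is anticipated: Proposition~\ref{Pro : CommuteShiftThetaLB} does the essential work, and the only genuine verification is the short ceiling-function computation above, which ensures that the ``remainder'' $s_{\B^{(n)},0}(x)$ stays inside the lazy-domain interval of $\B^{(n+1)}$.
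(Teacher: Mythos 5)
Your argument is correct, but it takes a different route from the paper. The paper proves shift-invariance of $\Delta'_{\B}$ by flipping back to the greedy side: it uses the identity $\Delta'_{\B}=\bigcup_{n\in\N}\theta_{\B^{(n)}}(D_{\B^{(n)}})$ from~\eqref{Eq : DeltaUnionTheta}, commutes $\sigma$ with the flip via Lemma~\ref{Lem : CommuteShiftTheta}, and invokes the greedy admissibility theorem of~\cite{CharlierCisternino2021} to conclude $\sigma(b)\in D_{\B^{(n+1)}}$, hence $\sigma(a)\in D'_{\B^{(n+1)}}$; the closure part is then dispatched in one sentence. You instead stay entirely inside the lazy framework: you apply Proposition~\ref{Pro : CommuteShiftThetaLB} to the base $\B^{(n)}$ and check by a direct ceiling computation that the remainder $s_{\B^{(n)},0}(x)=x_{\B^{(n+1)}}+(y-\ceil{y})$ lands in $(x_{\B^{(n+1)}}-1,x_{\B^{(n+1)}}]$, so that $\sigma(a)\in D'_{\B^{(n+1)}}$. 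That verification is in fact already implicit in the statement of Proposition~\ref{Pro : CommuteShiftThetaLB} (for the identity $\sigma(\LBi{n}(x))=\LBi{n+1}(s_{\B^{(n)},0}(x))$ to make sense, the remainder must lie in the lazy domain; this can also be read off from Proposition~\ref{Pro : LinkGreedyLazy}, where $s_{\B,n}(x_{\B}-x)=x_{\B^{(n+1)}}-r_{\B,n}(x)$ with $r_{\B,n}(x)\in[0,1)$), so your computation is redundant but harmless and makes the step self-contained. Your treatment of $\Sigma'_{\B}$ via continuity of $\sigma$, $\sigma(\overline{\Delta'_{\B}})\subseteq\overline{\sigma(\Delta'_{\B})}\subseteq\overline{\Delta'_{\B}}$, is cleaner and more explicit than the paper's closing remark. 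What the paper's approach buys is conformity with its stated strategy of deriving everything from the flip map and the greedy results; what yours buys is an argument that does not re-import the greedy Parry theorem at this point.
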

\begin{proof}
Let $a$ be an infinite word over $\N$. By~\eqref{Eq : DeltaUnionTheta}, if $a$ belongs to $\Delta'_{\B}$, then there exists $n\in \N$ and an infinite word $b\in D_{\B^{(n)}}$ such that $a=\theta_{\B^{(n)}}(b)$. We obtain that $\sigma(a)=\sigma(\theta_{\B^{(n)}}(b))=\theta_{\B^{(n+1)}}(\sigma(b))$ by Lemma~\ref{Lem : CommuteShiftTheta}. By~\cite[Theorem 26]{CharlierCisternino2021}, $\sigma(b)\in D_{\B^{(n+1)}}$ so $\sigma(a)\in D'_{\B^{(n+1)}}$. Then, it is easily seen that if $a\in S'_{\B^{(n)}}$ then $\sigma(a)\in S'_{\B^{(n+1)}}$.
\end{proof}

Since the set $\Sigma'_{\B}$ is shift-invariant and closed with respect to the topology induced by the prefix distance on infinite words, we conclude that the subset $\Sigma'_{\B}$ of $A_{\B}^{\N}$ is a subshift, which we call the \emph{lazy ${\B}$-shift}. 

\begin{remark}
It is important to remark that the lazy $\B$-shift is not the lazy $\B$-shift defined in~\cite{CharlierCisterninoDajani2021}. In fact, as said in~\cite[Remark 36]{CharlierCisterninoDajani2021}, there is two ways to extend the notion of $\beta$-shift from the real base case to the alternate bases or more generally to the Cantor base framework.
\end{remark}

Recall that the set of finite factors and the set of prefixes of all elements in a language $L$ are respectively denoted $\Fac(L)$ and $\Pref(L)$. 
Let us now study the factors of the lazy $\B$-shift.

\begin{proposition}
\label{prop:Fac}
We have $\Fac(D'_{\B})=\Fac(\Delta'_{\B})=\Fac(\Sigma'_{\B})$.
\end{proposition}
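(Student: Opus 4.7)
The plan is to verify the two non-trivial inclusions, the chain $\Fac(D'_{\B}) \subseteq \Fac(\Delta'_{\B}) \subseteq \Fac(\Sigma'_{\B})$ being immediate from the set containments $D'_{\B} \subseteq \Delta'_{\B} \subseteq \Sigma'_{\B}$.

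For $\Fac(\Sigma'_{\B}) \subseteq \Fac(\Delta'_{\B})$ I would invoke density of $\Delta'_{\B}$ in $\Sigma'_{\B}$: any occurrence of a finite factor $w$ at a position $p$ of some element $c \in \Sigma'_{\B}$ already appears in every element of $\Delta'_{\B}$ that agrees with $c$ on its first $p + |w|$ letters, and such elements exist by the definition of the closure in the prefix distance.

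The substantial part is $\Fac(\Delta'_{\B}) \subseteq \Fac(D'_{\B})$. My strategy is to prove the following stronger claim: for every $n \in \N_{\ge 1}$ and every $a \in D'_{\B^{(n)}}$,
\[
	(\ceil{\beta_0}-1)(\ceil{\beta_1}-1)\cdots(\ceil{\beta_{n-1}}-1)\,a \in D'_{\B}.
\]
Given this, any $w \in \Fac(\Delta'_{\B})$ is a factor of some $a \in D'_{\B^{(n)}}$, hence a factor of the prepended infinite word above, which lies in $D'_{\B}$, so $w \in \Fac(D'_{\B})$. To establish the claim I would use Theorem~\ref{Thm : LazyParry}. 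The alphabet requirement is clear, and for the lex condition applied to a shift by $k$, the case $k \geq n$ reduces to the Parry condition for $a$ at shift $k - n$; the case $0 \leq k < n$ reduces to checking that $\ceil{\beta_k}-1$ (the first letter of that shift) strictly exceeds the first letter of $\qLBi{k}(x_{\B^{(k)}}-1)$. By Proposition~\ref{Pro : LinkQuasiGreedyQuasiLazy} that first letter equals $\ceil{\beta_k}-1 - t$ where $t$ is the first letter of $\qDBi{k}(1)$, and $t \geq 1$ because $\beta_k > 1$ forces $\floor{\beta_k x} \geq 1$ for $x$ sufficiently close to $1^-$, giving the strict inequality at position $0$.

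The main obstacle I anticipate is precisely what to prepend. The naive choice of the prefix $\ell_0 \cdots \ell_{n-1}$ of $\qLB(x_{\B}-1)$ itself breaks down whenever some $\ell_j$ coincide with the maximal value, because the lex comparison at an intermediate shift $k < n$ then forces $a >_{\lex} \sigma^{n-k}(\qLBi{k}(x_{\B^{(k)}}-1))$, an inequality not guaranteed by the membership $a \in D'_{\B^{(n)}}$ alone. Prepending the all-maximal word sidesteps this obstacle by producing strict inequality already at the very first letter of every relevant shift, reducing the entire argument to the elementary fact that the leading digit of each $\qDBi{k}(1)$ is non-zero.
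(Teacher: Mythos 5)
Your proposal is correct, and it is essentially the paper's construction viewed from the other side of the flip. The paper takes $f\in\Fac(\Delta'_{\B})$, writes the relevant element of $D'_{\B^{(n)}}$ as $\theta_{\B^{(n)}}(b)$ with $b\in D_{\B^{(n)}}$, and observes that $f$ is a factor of $\theta_{\B}(0^nb)$, where $0^nb\in D_{\B}$ by the greedy admissibility criterion of~\cite{CharlierCisternino2021}; since $\theta_{\B}(0^nb)$ is exactly your word $(\ceil{\beta_0}-1)\cdots(\ceil{\beta_{n-1}}-1)\,a$, the prepended block is identical in both arguments. What differs is the verification: the paper quotes the greedy-side fact that prepending zeros preserves admissibility and transports it through $\theta_{\B}$ and Proposition~\ref{Pro : ThetaDBSB}, whereas you check the lazy Parry conditions of Theorem~\ref{Thm : LazyParry} directly, which requires the additional (correct) observation, via Proposition~\ref{Pro : LinkQuasiGreedyQuasiLazy}, that the leading digit of each $\qDBi{k}(1)$ is at least $1$, equivalently that the leading digit of $\qLBi{k}(x_{\B^{(k)}}-1)$ is strictly smaller than $\ceil{\beta_k}-1$, giving the strict lexicographic inequality at the shifts $k<n$. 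Your route is slightly longer but stays entirely inside the lazy framework of the present paper, while the paper's is shorter because it leans on the greedy result; your treatment of the easy inclusions and of $\Fac(\Sigma'_{\B})=\Fac(\Delta'_{\B})$ by density in the prefix distance agrees with the paper. Your closing remark about why one cannot prepend the prefix $\ell_0\cdots\ell_{n-1}$ of $\qLB(x_{\B}-1)$ is also a sound observation, though not needed for the proof itself.
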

\begin{proof}
By definition, we have $\Fac(D'_{\B})\subseteq\Fac(\Delta'_{\B})=\Fac(\Sigma'_{\B})$. It remains to show that $\Fac(D'_{\B})\supseteq\Fac(\Delta'_{\B})$. Let $f\in \Fac(\Delta'_{\B})$. By~\eqref{Eq : DeltaUnionTheta}, there exist $n\in \N$ and $b\in D_{\B^{(n)}}$ such that $f\in \Fac(\theta_{\B^{(n)}}(b))$. In particular, $f\in \Fac(\theta_{\B}(0^nb))$ where, by~\cite[Theorem 26]{CharlierCisternino2021}, $0^nb\in D_{\B}$. We obtain that $f\in \Fac(\theta_{\B}(D_{\B}))=\Fac(D'_{\B})$ by Proposition~\ref{Pro : ThetaDBSB}.
\end{proof}

\begin{corollary}\label{Cor : SigmaUnionThetaPref}
We have 
\[
\Fac(\Sigma'_{\B})=\bigcup_{n\in \N} \theta_{\B^{(n)}}\big( \Pref(D_{\B^{(n)}}) \big).
\]
\end{corollary}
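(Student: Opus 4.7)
The plan is to combine Proposition~\ref{prop:Fac} with the explicit description $D'_{\B}=\theta_{\B}(D_{\B})$ coming from Proposition~\ref{Pro : ThetaDBSB}, and to reduce everything to prefixes by using the elementary identity $\Fac(L)=\bigcup_{n\in\N}\Pref(\sigma^n(L))$ for any language $L$ of infinite words. Thus I would first rewrite
\[
\Fac(\Sigma'_{\B})=\Fac(D'_{\B})=\bigcup_{n\in\N}\Pref\bigl(\sigma^n(D'_{\B})\bigr)
\]
and then show that for every $n\in\N$,
\[
\Pref\bigl(\sigma^n(D'_{\B})\bigr)=\theta_{\B^{(n)}}\bigl(\Pref(D_{\B^{(n)}})\bigr),
\]
where $\theta_{\B^{(n)}}$ is understood letterwise on finite words (it is the restriction to an initial segment of the map already defined on infinite words). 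Summing these equalities over $n$ will give the corollary.

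For the inclusion $\subseteq$, I would pick a prefix $f$ of $\sigma^n(a)$ with $a\in D'_{\B}$. By Proposition~\ref{Pro : ThetaDBSB} there is $b\in D_{\B}$ with $a=\theta_{\B}(b)$, and Lemma~\ref{Lem : CommuteShiftTheta} gives $\sigma^n(a)=\theta_{\B^{(n)}}(\sigma^n(b))$. Since $\theta_{\B^{(n)}}$ acts letterwise, the length-$|f|$ prefix of $\theta_{\B^{(n)}}(\sigma^n(b))$ equals $\theta_{\B^{(n)}}$ applied to the length-$|f|$ prefix of $\sigma^n(b)$. Because $\sigma^n(b)\in D_{\B^{(n)}}$ by~\cite[Theorem 26]{CharlierCisternino2021} (the shift invariance used already in Proposition~\ref{Pro : Subshift}), this exhibits $f$ as an element of $\theta_{\B^{(n)}}(\Pref(D_{\B^{(n)}}))$.

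For the reverse inclusion $\supseteq$, given $w\in\Pref(D_{\B^{(n)}})$, extend $w$ to some $b'\in D_{\B^{(n)}}$. As in the proof of Proposition~\ref{prop:Fac}, the infinite word $0^nb'$ belongs to $D_{\B}$, hence $\theta_{\B}(0^nb')\in D'_{\B}$ by Proposition~\ref{Pro : ThetaDBSB}. Applying Lemma~\ref{Lem : CommuteShiftTheta} yields $\sigma^n(\theta_{\B}(0^nb'))=\theta_{\B^{(n)}}(b')$, so $\theta_{\B^{(n)}}(w)$ is a prefix of $\sigma^n(\theta_{\B}(0^nb'))$ and hence a prefix of an element of $\sigma^n(D'_{\B})$.

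The only mildly delicate point, and the main obstacle in writing the proof cleanly, is to make precise that $\theta_{\B^{(n)}}$ may be applied to finite words \emph{letterwise} and that this operation commutes with the prefix operation; once this is observed, the argument is essentially bookkeeping on top of Propositions~\ref{Pro : ThetaDBSB} and~\ref{prop:Fac} and Lemma~\ref{Lem : CommuteShiftTheta}.
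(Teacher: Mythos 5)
Your argument is correct and is essentially the paper's own route: the paper's proof simply writes $\Fac(\Sigma'_{\B})=\Pref(\Delta'_{\B})=\bigcup_{n\in\N}\Pref(D'_{\B^{(n)}})$ using Propositions~\ref{Pro : Subshift} and~\ref{prop:Fac}, and then transfers prefixes through the letterwise bijection of Proposition~\ref{Pro : ThetaDBSB}. Your version differs only in bookkeeping: instead of passing through $\Delta'_{\B}$ and its shift-invariance you use the identity $\Fac(L)=\bigcup_{n\in\N}\Pref(\sigma^n(L))$ and re-derive the prefix correspondence $\Pref(\sigma^n(D'_{\B}))=\theta_{\B^{(n)}}(\Pref(D_{\B^{(n)}}))$ via Lemma~\ref{Lem : CommuteShiftTheta} and the $0^n$-padding argument already contained in the proofs of Propositions~\ref{Pro : Subshift} and~\ref{prop:Fac}.
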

\begin{proof}
By Propositions~\ref{Pro : Subshift} and~\ref{prop:Fac}, we have
$\Fac(\Sigma'_{\B})=\Pref(\Delta'_{\B})=\bigcup_{n\in \N} \Pref(D'_{\B^{(n)}}).$ The conclusion follows from Proposition~\ref{Pro : ThetaDBSB}.
\end{proof}

In the alternate base framework, an analogue of Bertrand-Mathis' theorem~\cite{Bertrand-Mathis1989} can be stated for the lazy $\B$-shift. To do so, recall that a subshift $S$ of $A^{\N}$ is called \emph{sofic} if the language $\Fac(S)\subseteq A^*$ is accepted by a finite automaton. 

\begin{theorem}\label{Thm : SoficLazy}
Let $\B$ be an alternate base of length $p$. The lazy $\B$-shift $\Sigma'_{\B}$ is sofic if and only if for all $i\in \Int$, $\qLBi{i}(x_{\B^{(i)}}-1)$ is ultimately periodic.
\end{theorem}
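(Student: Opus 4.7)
The plan is to transfer the claim from the lazy side to the greedy side through the flip map $\theta_{\B}$ and then to invoke the analogous greedy version of Bertrand--Mathis' theorem established in~\cite{CharlierCisternino2021}. More precisely, I aim to prove the two separate equivalences:
\begin{enumerate}
\item for every $i\in\Int$, $\qLBi{i}(x_{\B^{(i)}}-1)$ is ultimately periodic if and only if $\qDBi{i}(1)$ is ultimately periodic;
\item $\Sigma'_{\B}$ is sofic if and only if the greedy shift $\Sigma_{\B}$ is sofic.
\end{enumerate}
Once both are in hand, the greedy Bertrand--Mathis theorem for alternate bases closes the loop.

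For~(1), apply Proposition~\ref{Pro : LinkQuasiGreedyQuasiLazy} to the alternate base $\B^{(i)}$ to get the identity $\qLBi{i}(x_{\B^{(i)}}-1)=\theta_{\B^{(i)}}(\qDBi{i}(1))$. Since $\B^{(i)}$ is again periodic of period $p$, the map $\theta_{\B^{(i)}}$ acts letter by letter via the involution $a_n\mapsto \ceil{\beta_{i+n}}-1-a_n$, whose rule depends on $n$ only through $n\bmod p$. Such a position-$p$-periodic substitution sends ultimately periodic sequences to ultimately periodic sequences (with period dividing the least common multiple of $p$ and the input period), and because the map is an involution the implication runs in reverse as well.

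For~(2), by Corollary~\ref{Cor : SigmaUnionThetaPref} together with the equality $\B^{(n)}=\B^{(n\bmod p)}$, the union defining $\Fac(\Sigma'_{\B})$ reduces to a finite one:
\[
\Fac(\Sigma'_{\B})=\bigcup_{i=0}^{p-1}\theta_{\B^{(i)}}\bigl(\Pref(D_{\B^{(i)}})\bigr),
\]
and likewise $\Fac(\Sigma_{\B})=\bigcup_{i=0}^{p-1}\Pref(D_{\B^{(i)}})$. It therefore suffices to show that each position-$p$-periodic letter flip $\theta_{\B^{(i)}}$ preserves regularity, and since $\theta_{\B^{(i)}}$ is its own inverse the argument is automatically symmetric. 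This I would establish by an explicit DFA construction: from an automaton $(Q,\delta,q_0,F)$ recognising $\Pref(D_{\B^{(i)}})$, build the automaton on state set $Q\times\Int$ with initial state $(q_0,0)$, accepting set $F\times\Int$, and transitions sending $(q,j)$ on letter $a$ to $(\delta(q,\ceil{\beta_{i+j}}-1-a),(j+1)\bmod p)$; this accepts exactly $\theta_{\B^{(i)}}(\Pref(D_{\B^{(i)}}))$.

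The main obstacle I expect lies in step~(2): verifying that the position-$p$ counter correctly reproduces the flipped language even though the digits of elements of $\Pref(D_{\B^{(i)}})$ live in possibly varying alphabets $[\![0,\ceil{\beta_{i+n}}-1]\!]$, and that the finite union over $i$ preserves regularity on both sides. Once this is settled, combining~(1) and~(2) with the greedy analogue from~\cite{CharlierCisternino2021} delivers the claimed equivalence.
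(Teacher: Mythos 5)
Your sufficiency direction is essentially the paper's argument: if all $\qLBi{i}(x_{\B^{(i)}}-1)$ are ultimately periodic, then by Proposition~\ref{Pro : LinkQuasiGreedyQuasiLazy} (and the fact that the flip acts letterwise with a rule depending only on the position mod $p$) all $\qDBi{i}(1)$ are ultimately periodic, so by \cite[Theorem 48]{CharlierCisternino2021} each individual language $\Pref(D_{\B^{(i)}})$ is accepted by a finite automaton; flipping that automaton --- your product with a mod-$p$ counter is just a variant of the paper's relabelled automaton $\mathcal{A}'_{\B}$ and of Lemmas~\ref{Lem : TransitionsGreedyLazyIIF}--\ref{Lem : AutomataTheta} --- and invoking Corollary~\ref{Cor : SigmaUnionThetaPref} yields soficness of $\Sigma'_{\B}$. (Do note that here you need the finer content of the greedy theorem, namely that each $\Pref(D_{\B^{(i)}})$ is regular separately, not merely that their union $\Fac(\Sigma_{\B})$ is.)

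The genuine gap is in the necessity direction, hidden in your step~(2) in the implication ``$\Sigma'_{\B}$ sofic $\Rightarrow$ $\Sigma_{\B}$ sofic''. Soficness of $\Sigma'_{\B}$ only tells you that the finite union $\bigcup_{i=0}^{p-1}\theta_{\B^{(i)}}\bigl(\Pref(D_{\B^{(i)}})\bigr)$ is regular; regularity of a union does not pass to its constituents, so there is nothing to which your ``a position-$p$-periodic flip preserves regularity'' lemma can be applied, and the involution property does not rescue this. Nor is there a single rational transduction carrying $\Fac(\Sigma'_{\B})$ onto $\Fac(\Sigma_{\B})$: the flipping rule depends on the index $i$ of the piece a given factor comes from, the sets $\Pref(D'_{\B^{(i)}})$ overlap, and guessing $i$ nondeterministically produces spurious images $\theta_{\B^{(j)}}\bigl(\Pref(D'_{\B^{(i)}})\bigr)$ with $j\ne i$, so you only obtain a regular superset of $\Fac(\Sigma_{\B})$. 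This is precisely why the paper does not argue by closure properties in this direction but proves necessity directly: assuming some $\qLBi{j}(x_{\B^{(j)}}-1)$ is not ultimately periodic, it constructs (following the greedy proof of \cite[Theorem 48]{CharlierCisternino2021}) words $w^{(m)}$ and separating suffixes $z$ with $w^{(m)}z\in\Fac(\Sigma'_{\B})$ and $w^{(n)}z\notin\Fac(\Sigma'_{\B})$, producing infinitely many Myhill--Nerode classes, hence non-regularity of $\Fac(\Sigma'_{\B})$. Your proof needs such an argument (on the lazy side, or an honest proof that soficness of $\Sigma'_{\B}$ forces soficness of $\Sigma_{\B}$); as it stands, the ``only if'' half is unproved.
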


In order to prove this result, let us construct an automaton $\mathcal{A}'_{\B}$ in the case where all quasi-lazy expansions are ultimately periodic and state some results in order to link this automaton with the one used in the greedy case (see~\cite[Theorem 48]{CharlierCisternino2021}) called $\mathcal{A}_{\B}$. Roughly, if all the quasi-lazy expansions are ultimately periodic, then so are the quasi-greedy expansions and the ``image'' of the automaton $\mathcal{A}_{\B}$ under the maps $\theta_{\B^{(i)}}$ with $i\in \Int$ is an automaton accepting $\Fac(\Sigma'_{\B})$. This notion of ``image'' of the automaton under the maps $\theta_{\B^{(i)}}$ will be clearer in what follows, more precisely in Lemmas~\ref{Lem : TransitionsGreedyLazyIIF} and~\ref{Lem : AutomataTheta}.

Henceforth, let $\B$ be an alternate base of length $p$ and suppose that for all $i\in \Int$, $\qLBi{i}(x_{\B^{(i)}}-1)$ is ultimately periodic and write\footnote{Recall that $\qLBi{i}(x_{\B^{(i)}}-1)$ can be finite, hence, $n_i$ can be equal to $1$ and $\ell_{m_i}^{(i)}=0$.} 
 \[
\qLBi{i}(x_{\B^{(i)}}-1)=\ell_0^{(i)}\cdots \ell_{m_i-1}^{(i)} \big(\ell_{m_i}^{(i)}\cdots \ell_{m_i+n_i-1}^{(i)}\big)^\omega.
\] 
Without loss of generality, from now on, suppose that $n_i$ is a multiple of $p$ (it suffices to take the least common multiple of $p$ and the length of the period).
For all $i\in \Int$, by Proposition~\ref{Pro : LinkQuasiGreedyQuasiLazy}, we get\footnote{Note that the preperiod and period $m_i$ and $n_i$ may be not minimal.}
 \[
\qDBi{i}(1)=t_0^{(i)}\cdots t_{m_i-1}^{(i)} \big(t_{m_i}^{(i)}\cdots t_{m_i+n_i-1}^{(i)}\big)^\omega
\] 
with $t^{(i)}_n=\ceil{\beta_{i+n}}-1-\ell^{(i)}_n$ for all $n\in [\![0,m_i+n_i-1]\!]$.
Hence, all quasi-greedy expansions of $1$ are ultimately periodic. Let $\mathcal{A}_{\B}$ be the automaton over the alphabet $A_{\B}$
from~\cite[Section 7.3]{CharlierCisternino2021} which accepts $\Fac(\Sigma_{\B})$ (see~\cite[Theorem 48]{CharlierCisternino2021}).
Recall that $\mathcal{A}_{\B}=(Q,I,F,A_{\B},\delta)$ where
\begin{align*}
	Q&=\big\{q_{i,j,k}\colon 
		i,j\in\Int,\ k\in[\![0,m_i+n_i-1]\!]\big\},\\
	I\, &=\big\{q_{i,i,0}\colon i\in\Int\big\},\\
	F&=Q
\end{align*}
and, for each $i,j\in\Int$ and each $k\in[\![0,m_i+n_i-1]\!]$, we have
\[
	\delta(q_{i,j,k},t_k^{(i)})=
	\begin{cases}
		q_{i,(j+1)\bmod p,k+1} & \text{ if }k\ne m_i+n_i-1\\
		q_{i,(j+1)\bmod p,m_i} & \text{ else}
	\end{cases}
\]
and for all $s\in[\![0,t_k^{(i)}-1]\!]$, we have
\[
	\delta(q_{i,j,k},s)=q_{(j+1)\bmod p,(j+1)\bmod p,0}.
\]
Define the automaton $\mathcal{A}'_{\B}=(Q,I,F,A_{\B},\delta')$ where for each $i,j\in\Int$ and each $k\in[\![0,m_i+n_i-1]\!]$, 
we have 
\begin{equation}
		\label{Eq : TransitionFirstPossibility}
\delta'(q_{i,j,k},\ell_k^{(i)})=
\begin{cases}
q_{i,(j+1)\bmod p,k+1} & \text{if } k\ne m_i+n_i-1 
		\\
		q_{i,(j+1)\bmod p,m_i} & \text{else}
\end{cases}
\end{equation}
and for all $s\in[\![\ell_k^{(i)}+1,\ceil{\beta_{j}}-1]\!]$, we have
\begin{equation}
\label{Eq : TransitionThirdPossibility}
	\delta'(q_{i,j,k},s)=q_{(j+1)\bmod p,(j+1)\bmod p,0}.
\end{equation}

Since we supposed that the parameters $n_i$, with $i\in \Int$, were multiples of $p$, we get the following result.

\begin{lemma}
In the automata $\mathcal{A}_{\B}$ and $\mathcal{A}'_{\B}$, for all $i,j\in \Int$ and $k\in [\![0,m_i+n_i-1]\!]$, the state $q_{i,j,k}$ is accessible if and only if $i+k\equiv j \pmod p$.
\end{lemma}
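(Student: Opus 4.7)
The plan is to prove the two implications separately, noting that accessibility depends only on the underlying transition graph and not on the labels, so the same argument applies verbatim to both automata $\mathcal{A}_{\B}$ and $\mathcal{A}'_{\B}$, which share an identical structure of transitions (the only difference being that the ``spine'' edges are labelled by $t_k^{(i)}$ in one case and $\ell_k^{(i)}$ in the other, while the ``reset'' edges carry the remaining digits in each alphabet).

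For the forward direction, I would argue by induction on the length of a path from an initial state. The initial states $q_{i,i,0}$ trivially satisfy $i+0 \equiv i \pmod p$. For the inductive step, I need to check that each of the three types of outgoing transitions preserves the invariant $i+k \equiv j \pmod p$: the spine transition $q_{i,j,k} \to q_{i,(j+1)\bmod p,k+1}$ (for $k<m_i+n_i-1$) clearly preserves it since both $k$ and $j$ increase by $1$; the reset transition $q_{i,j,k} \to q_{(j+1)\bmod p,(j+1)\bmod p,0}$ trivially satisfies the invariant at the target state.

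The hard part will be the cyclic transition $q_{i,j,m_i+n_i-1} \to q_{i,(j+1)\bmod p,m_i}$: to preserve the invariant I need $i+m_i \equiv j+1 \pmod p$, while the inductive hypothesis only gives $i+m_i+n_i-1 \equiv j \pmod p$. Subtracting, compatibility is equivalent to $n_i \equiv 0 \pmod p$, which is exactly the standing assumption that $n_i$ has been chosen as a multiple of $p$. This is the unique place where that convention is actually used, and it is essentially why the assumption was introduced in the first place.

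For the backward direction, I would observe that starting from the initial state $q_{i,i,0}$ and reading the prefix $t_0^{(i)}\cdots t_{k-1}^{(i)}$ of $\qDBi{i}(1)$ (respectively $\ell_0^{(i)}\cdots \ell_{k-1}^{(i)}$ of $\qLBi{i}(x_{\B^{(i)}}-1)$), the automaton follows the spine and reaches, by a short induction on $k$ using only the first clause in the definition of $\delta$ (respectively $\delta'$), the state $q_{i,(i+k)\bmod p,k}$. Under the hypothesis $i+k\equiv j\pmod p$, this endpoint is exactly $q_{i,j,k}$, so this state is accessible.
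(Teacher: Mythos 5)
Your proof is correct and follows essentially the same route as the paper: sufficiency by running along the spine from $q_{i,i,0}$ with the prefix of the quasi-expansion, and necessity by tracking the congruence $i+k\equiv j\pmod p$ through the transitions, with the wrap-around edge being exactly where the convention $n_i\equiv 0\pmod p$ is needed. Your phrasing of necessity as an invariant preserved by all three transition types (resets included) is a slightly cleaner packaging of the paper's reduction to paths using only spine transitions, but the substance is identical.
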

\begin{proof}
Let us prove the result for the automaton $\mathcal{A}'_{\B}$. The reasoning for the automaton $\mathcal{A}_{\B}$ is similar.
Suppose that $i+k\equiv j \pmod p$. There exists a path from $q_{i,i,0}$ to $q_{i,j,k}$ labeled by $\ell_0^{(i)}\cdots \ell_{k}^{(i)}$. In fact, for all $k'\in [\![0,k-1]\!]$, we have 
\begin{equation}\label{Eq : AccessibleJ}
\delta'(q_{i,(i+k')\bmod p,k'}, \ell_{k'}^{(i)})=q_{i,(i+k'+1)\bmod p,k'+1}.
\end{equation}
Conversely, let $i,j\in \Int$ and $k\in [\![0,m_i+n_i-1]\!]$.
Suppose that the state $q_{i,j,k}$ is accessible. 
Let $c$ be an initial path ending in $q_{i,j,k}$. By definition of the transitions, if a path starts in $q_{i',i',0}$ with $i' \in \Int \setminus \{i\}$ and ends in $q_{i,j,k}$ then it necessarily goes through $q_{i,i,0}$ by using a transition of the form~\eqref{Eq : TransitionThirdPossibility}. Hence, we may suppose that the path $c$ only uses transitions of the form~\eqref{Eq : TransitionFirstPossibility}. The conclusion follows since for all $k'\in [\![0,k-1]\!]$, we have~\eqref{Eq : AccessibleJ} and 
\begin{align*}
\delta'(q_{i,(i+m_i+n_i-1)\bmod p,m_i+n_i-1},\ell_{m_i+n_i-1}^{(i)})&=q_{i,(i+m_i+n_i)\bmod p,m_i}
\end{align*}
where $n_i\equiv 0 \pmod p$ by assumption.
\end{proof}

By the previous lemma, from now on, we consider the automata $\mathcal{A}_{\B}$ and $\mathcal{A}'_{\B}$ by preserving only the set
\[
\big\{q_{i,(i+k)\bmod p,k}\colon i\in\Int,\ k\in[\![0,m_i+n_i-1]\!]\big\}
\]
of accessible states and we keep the same notation.

\begin{lemma}\label{Lem : TransitionsGreedyLazyIIF}
Let $a\in A_{\B}$, $i_1,i_2\in \Int$ and $k_1\in[\![0,m_{i_1}+n_{i_1}-1]\!],k_2\in [\![0,m_{i_2}+n_{i_2}-1]\!]$. We have
\[
\delta(q_{i_1,(i_1+k_1)\bmod p,\, k_1},a)=q_{i_2,(i_2+k_2)\bmod p,\, k_2}\]
 if and only if 
 \[
 \delta'(q_{i_1,(i_1+k_1)\bmod p,\, k_1},\ceil{\beta_{i_1+k_1}}-1-a)=q_{i_2,(i_2+k_2)\bmod p,\, k_2}. \]
\end{lemma}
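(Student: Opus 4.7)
The plan is to reduce the statement to a direct case analysis on the letter $a$, observing that the transition rules of $\mathcal{A}_{\B}$ and $\mathcal{A}'_{\B}$ are related by exactly the same ``flip'' that sends $t^{(i)}_k$ to $\ell^{(i)}_k$. The key preliminary observation I would record is that for any accessible state $q_{i,(i+k)\bmod p,\,k}$, the second index satisfies $j=(i+k)\bmod p$, so by periodicity of $\B$ we have $\ceil{\beta_j}-1=\ceil{\beta_{i+k}}-1$. Moreover, from Proposition~\ref{Pro : LinkQuasiGreedyQuasiLazy} together with the formula $t^{(i)}_k=\ceil{\beta_{i+k}}-1-\ell^{(i)}_k$ noted in the paragraph defining $\mathcal{A}'_{\B}$, flipping $a\in[\![0,\ceil{\beta_{i_1+k_1}}-1]\!]$ by $a\mapsto \ceil{\beta_{i_1+k_1}}-1-a$ gives a bijection between the domain of $\delta(q_{i_1,(i_1+k_1)\bmod p,\,k_1},\cdot)$ and that of $\delta'(q_{i_1,(i_1+k_1)\bmod p,\,k_1},\cdot)$.

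Then I would split into two cases, corresponding to the two rules defining each automaton:

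\emph{Case 1:} $a=t^{(i_1)}_{k_1}$. Then the flipped letter equals $\ell^{(i_1)}_{k_1}$. By the rule for $\delta$, we have $\delta(q_{i_1,(i_1+k_1)\bmod p,\,k_1},t^{(i_1)}_{k_1})=q_{i_1,(i_1+k_1+1)\bmod p,\,k_1+1}$ when $k_1\ne m_{i_1}+n_{i_1}-1$ and $q_{i_1,(i_1+k_1+1)\bmod p,\,m_{i_1}}$ otherwise. By~\eqref{Eq : TransitionFirstPossibility}, $\delta'(q_{i_1,(i_1+k_1)\bmod p,\,k_1},\ell^{(i_1)}_{k_1})$ hits the same state in each subcase.

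\emph{Case 2:} $a\in[\![0,t^{(i_1)}_{k_1}-1]\!]$. Then $\ceil{\beta_{i_1+k_1}}-1-a$ ranges over $[\![\ell^{(i_1)}_{k_1}+1,\ceil{\beta_{i_1+k_1}}-1]\!]$. In this case $\delta(q_{i_1,(i_1+k_1)\bmod p,\,k_1},a)=q_{(j_1+1)\bmod p,\,(j_1+1)\bmod p,\,0}$ where $j_1=(i_1+k_1)\bmod p$, and by~\eqref{Eq : TransitionThirdPossibility}, $\delta'$ lands on the same state.

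Because the two cases together cover every letter in $[\![0,\ceil{\beta_{i_1+k_1}}-1]\!]$ and the flip is an involution, this yields the ``if and only if'' at once: the target states $q_{i_2,(i_2+k_2)\bmod p,\,k_2}$ determined by the two transitions agree for every pair $(a,\ceil{\beta_{i_1+k_1}}-1-a)$. I do not anticipate any genuine obstacle here; the entire argument is a bookkeeping exercise on the definitions of $\delta$ and $\delta'$, with the only point requiring attention being the reduction $\beta_j=\beta_{i+k}$ that justifies using $\ceil{\beta_{i_1+k_1}}$ as the modulus of the flip in both the $t$-side and the $\ell$-side of the computation.
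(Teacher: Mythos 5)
Your proposal is correct and follows essentially the same route as the paper: both write out the transition rules of $\mathcal{A}_{\B}$ and $\mathcal{A}'_{\B}$ at the accessible state $q_{i,(i+k)\bmod p,\,k}$ and match them case by case via the relation $t^{(i)}_k=\ceil{\beta_{i+k}}-1-\ell^{(i)}_k$, which sends $[\![0,t^{(i)}_k-1]\!]$ onto $[\![\ell^{(i)}_k+1,\ceil{\beta_{i+k}}-1]\!]$. Your extra remark that $\ceil{\beta_j}=\ceil{\beta_{i+k}}$ for accessible states (periodicity) is a harmless clarification of what the paper uses implicitly.
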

\begin{proof}
Fix $a\in A_{\B}$, $i\in \Int$ and $k\in[\![0,m_{i}+n_{i}-1]\!]$.
By definition of the automaton $\mathcal{A}_{\B}$, from $q_{i,(i+k)\bmod p,\, k}$ we have the following transitions
\[
\delta(q_{i,(i+k)\bmod p,\, k},a)=
\begin{cases}
q_{i,(i+k+1)\bmod p,\, k+1} & \text{if } a=t_{k}^{(i)} \text{ and } k\ne m_i+n_i-1\\
q_{i,(i+m_i)\bmod p,\, m_i} & \text{if } a=t_{k}^{(i)} \text{ and } k= m_i+n_i-1\\
q_{(i+k+1)\bmod p,(i+k+1)\bmod p,\,  0} & \text{if } a\in [\![0, t_k^{(i)}-1]\!].
\end{cases}
\]
Similarly, by definition of $\mathcal{A}'_{\B}$,
we have
\[
\delta'(q_{i,(i+k)\bmod p,\, k},a)=
\begin{cases}
q_{i,(i+k+1)\bmod p,\, k+1} & \text{if } a=\ell_{k}^{(i)} \text{ and } k\ne m_i+n_i-1\\
q_{i,(i+m_i)\bmod p,\, m_i} & \text{if } a=\ell_{k}^{(i)} \text{ and } k= m_i+n_i-1\\
q_{(i+k+1)\bmod p,(i+k+1)\bmod p,\,  0} & \text{if } a\in [\![ \ell_k^{(i)}+1,\ceil{\beta_{i+k}}-1]\!].
\end{cases}
\]
We get the conclusion since $\ell_{k}^{(i)}=\ceil{\beta_{i+k}}-1-t_{k}^{(i)}$, and hence $a\in [\![0, t_k^{(i)}-1]\!]$ if and only if $
\ceil{\beta_{i+k}}-1-a\in[\![\ell_k^{(i)}+1,\ceil{\beta_{i+k}}-1]\!].$
\end{proof}

\begin{example} \label{Ex : AlternateSoficAutomaton}
Let $\B=(\overline{\varphi^2, 3+\sqrt{5}})$. We have $\DB(1)=2(30)^\omega$, $\DBi{1}(1)=5(03)^\omega$ and  $\LB(x_{\B}-1)=02^\omega$, $\LBi{1}(x_{\B^{(1)}}-1)=02^\omega$. The corresponding accessible automata $\mathcal{A}_{\B}$ and $\mathcal{A}'_{\B}$ are depicted in Figure~\ref{fig:Automaton-230-503-accessible} with red and blue labels respectively.
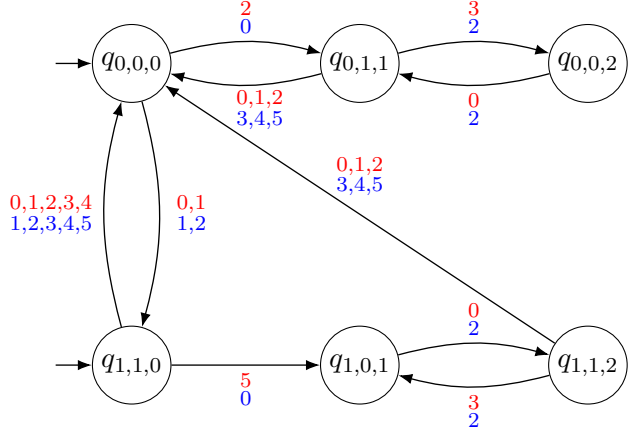
\begin{figure}
\centering
\begin{tikzpicture}
\tikzstyle{every node}=[shape=circle, fill=none, draw=black,
minimum size=20pt, inner sep=2pt]
\node(1) at (0,4) {$q_{0,0,0}$};
\node(3) at (6,4) {$q_{0,0,2}$};
\node(5) at (3,4) {$q_{0,1,1}$};

\node(8) at (3,0) {$q_{1,0,1}$};
\node(10) at (0,0) {$q_{1,1,0}$};
\node(12) at (6,0) {$q_{1,1,2}$};

\tikzstyle{every node}=[shape=circle, fill=none, draw=black, minimum size=15pt, inner sep=2pt]
\tikzstyle{every path}=[color=black, line width=0.5 pt]
\tikzstyle{every node}=[shape=circle, minimum size=5pt, inner sep=2pt]
\draw [-Latex] (-1,4) to node [above] {$ $} (1); 
\draw [-Latex] (-1,0) to node [above] {$ $} (10); 

\draw [-Latex] (1) to [bend left=15] node [above] {$\substack{{\color{red}2}\\ {\color{blue}0}}$} (5); 
\draw [-Latex] (1) to [bend left=15] node [right] {$\substack{{\color{red}0,1}\\{\color{blue}1,2}} $} (10);

\draw [-Latex] (3) to [bend left=15] node [below] {$\substack{{\color{red}0}\\{\color{blue}2}}$} (5);

\draw [-Latex] (5) to [pos=0.4,bend left=15] node [below=-0.17] {$\substack{{\color{red}0,1,2}\\{\color{blue}3,4,5}}$} (1); 
\draw [-Latex] (5) to [bend left=15] node [above] {$\substack{{\color{red}3}\\{\color{blue}2}}$} (3);

\draw [-Latex] (8) to [bend left=15] node [above] {$\substack{{\color{red}0}\\{\color{blue}2}}$} (12); 

\draw [-Latex] (10) to  node [below] {$\substack{{\color{red}5}\\{\color{blue}0}}$} (8); 
\draw [-Latex] (10) to [bend left=15] node [left] {$\substack{{\color{red}0,1,2,3,4}\\{\color{blue}1,2,3,4,5}}$} (1);

\draw [-Latex] (12) to [bend left=15] node [below] {$\substack{{\color{red}3}\\{\color{blue}2}}$} (8); 
\draw [-Latex] (12) to node [above] {$\substack{{\color{red}0,1,2}\\{\color{blue}3,4,5}}$} (1); 
\end{tikzpicture}
\caption{An accessible automaton accepting $\Fac(\Sigma_{(\overline{\varphi^2, 3+\sqrt{5}})})$ (labels above and red) and  $\Fac(\Sigma'_{(\overline{\varphi^2, 3+\sqrt{5}})})$ (labels below and blue).}
\label{fig:Automaton-230-503-accessible}
\end{figure}
\end{example}

\begin{lemma}\label{Lem : AutomataTheta}
Let $i\in\Int$ and consider $w\in A_{\B}^\N$. The word $w$ is accepted in $\mathcal{A}_{\B}$ from $q_{i,i,0}$ if and only if $\theta_{\B^{(i)}}(w)$ is accepted in $\mathcal{A}'_{\B}$ from $q_{i,i,0}$.
\end{lemma}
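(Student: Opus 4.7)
My plan is to prove the lemma by induction on finite prefixes of $w$. Specifically, I would establish the stronger statement that, for every $n\in\N$, the $n$-letter prefix of $w$ labels a run in $\mathcal{A}_{\B}$ from $q_{i,i,0}$ to some accessible state $q$ if and only if the $n$-letter prefix of $\theta_{\B^{(i)}}(w)$ labels a run in $\mathcal{A}'_{\B}$ from $q_{i,i,0}$ to the \emph{same} state $q$. Since $F=Q$ in both automata and both transition functions are deterministic, an infinite word is accepted precisely when every finite prefix labels a valid run, so this stronger inductive statement implies the lemma at once.

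The key observation that makes Lemma~\ref{Lem : TransitionsGreedyLazyIIF} applicable at every step of the induction is that every transition in either automaton increments the second coordinate of the current state by $1\bmod p$. Hence, starting from $q_{i,i,0}$ and reading $n$ letters along a valid run, one always lands at an accessible state of the form $q_{i',(i+n)\bmod p,k'}$ with $i'+k'\equiv i+n \pmod p$. By the periodicity of the alternate base $\B$ we then have $\ceil{\beta_{i'+k'}}-1=\ceil{\beta_{i+n}}-1$, which is exactly the value by which $\theta_{\B^{(i)}}$ flips the letter of $w$ at position $n$, namely $\theta_{\B^{(i)}}(w)_n=\ceil{\beta_{i+n}}-1-w_n$.

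With this invariant in place, the inductive step becomes routine: the base case $n=0$ holds since both runs start at $q_{i,i,0}$, and assuming both runs have reached the same accessible state $q$ after $n$ letters, Lemma~\ref{Lem : TransitionsGreedyLazyIIF} gives a bijection between the transitions $\delta(q,w_n)=q'$ in $\mathcal{A}_{\B}$ and the transitions $\delta'(q,\theta_{\B^{(i)}}(w)_n)=q'$ in $\mathcal{A}'_{\B}$. Thus both runs proceed in lockstep to the same next state, closing the induction.

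The only delicate point I anticipate is verifying carefully that the invariant on the second coordinate is preserved by both kinds of transitions, namely the "main-cycle" transitions (which advance $k$ by one within a fixed first coordinate $i'$) and the "reset" transitions $q_{i',j,k'}\mapsto q_{(j+1)\bmod p,(j+1)\bmod p,0}$ (which zero out $k$ and change the first coordinate). In both cases the second coordinate advances by exactly one modulo $p$, so the invariant persists and Lemma~\ref{Lem : TransitionsGreedyLazyIIF} applies uniformly at each step, regardless of which type of transition is used.
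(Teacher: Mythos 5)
Your argument is correct and is essentially the paper's own proof: the paper simply states that the lemma ``immediately follows'' from Lemma~\ref{Lem : TransitionsGreedyLazyIIF}, and your induction on prefixes, with the invariant that the second coordinate of the reached accessible state equals $(i+n)\bmod p$ so that, by $p$-periodicity of $\B$, the flip value $\ceil{\beta_{i'+k'}}-1$ at the current state coincides with $\ceil{\beta_{i+n}}-1$ used by $\theta_{\B^{(i)}}$, is exactly the routine verification behind that ``immediately''. No gap; you have just made explicit what the paper leaves implicit.
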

\begin{proof}
This immediately follows from Lemma~\ref{Lem : TransitionsGreedyLazyIIF}.
\end{proof}

We are now ready to prove Theorem~\ref{Thm : SoficLazy}.

\begin{proof}[Proof of Theorem~\ref{Thm : SoficLazy}]
Suppose that, for all $i\in \Int$, $\qLBi{i}(x_{\B^{(i)}}-1)$ is ultimately periodic. 
For all $i\in \Int$, let
\[
\qLBi{i}(x_{\B^{(i)}}-1)=\ell_0^{(i)}\cdots \ell_{m_i-1}^{(i)} \big(\ell_{m_i}^{(i)}\cdots \ell_{m_i+n_i-1}^{(i)}\big)^\omega
\] 
with $n_i$ multiple of $p$. 
By Proposition~\ref{Pro : LinkQuasiGreedyQuasiLazy}, for all $i\in \Int$, we obtain 
 \[
\qDBi{i}(1)=t_0^{(i)}\cdots t_{m_i-1}^{(i)} \big(t_{m_i}^{(i)}\cdots t_{m_i+n_i-1}^{(i)}\big)^\omega
\] 
with $t^{(i)}_n=\ceil{\beta_{i+n}}-1-\ell^{(i)}_n$ for all $n\in [\![0,m_i+n_i-1]\!]$.
Let $\mathcal{A}_{\B}$ and $\mathcal{A}'_{\B}$ be the automata associated with the greedy and lazy expansions respectively. By~\cite[Theorem 48]{CharlierCisternino2021}, for each $i\in \Int$, the language accepted in $\mathcal{A}_{\B}$ from the initial state $q_{i,i,0}$ is precisely $\Pref(D_{\B^{(i)}})$. Hence, by Lemma~\ref{Lem : AutomataTheta},  in $\mathcal{A}'_{\B}$ the language accepted from the initial state $q_{i,i,0}$ is precisely $\theta_{\B^{(i)}}(\Pref(D_{\B^{(i)}}))$. We get the conclusion by Corollary~\ref{Cor : SigmaUnionThetaPref}.

Conversely, suppose that there exists $j\in\Int$ such that $\qLBi{j}(x_{\B^{(j)}}-1)$ is not ultimately periodic, then we prove that $\Sigma'_{\B}$ is not sofic. This follows the same lines as in the greedy case (see~\cite[Theorem 48]{CharlierCisternino2021}). Hence, in the subsequent, the main ideas of the proof are given.
Let 
\[
	\qLBi{i}(x_{\B^{(i)}}-1)=\ell_0^{(i)}\ell_1^{(i)}\cdots\quad \text{ for every } i\in\Int.
\] 
We define a partition $(G_1,\ldots,G_q)$ of $\Int$ as follows. 
Let $r=\Card\{\qLBi{i}(x_{\B^{(i)}}-1)\colon i\in\Int\}$ and let $i_1,\ldots,i_r\in\Int$ be such that $\qLBi{i_1}(x_{\B^{(i_1)}}-1),\ldots,\qLBi{i_r}(x_{\B^{(i_r)}}-1)$ are pairwise distinct and $\qLBi{i_1}(x_{\B^{(i_1)}}-1)<_{\lex}\cdots<_{\lex}\qLBi{i_r}(x_{\B^{(i_r)}}-1)$. Let $q\in[\![1,r]\!]$ be the unique index such that $\qLBi{i_q}(x_{\B^{(i_q)}}-1)=\qLBi{j}(x_{\B^{(j)}}-1)$ where $\qLBi{j}(x_{\B^{(j)}}-1)$ is not ultimately periodic by assumption.
We set 
\[
	G_s=\{i\in\Int\colon \qLBi{i}(x_{\B^{(i)}}-1)=\qLBi{i_s}(x_{\B^{(i_s)}}-1)\}\quad \text{for }s\in[\![1,q-1]\!] 
\]
and 
\[
	G_q=\{i\in\Int\colon \qLBi{i}(x_{\B^{(i)}}-1)\ge_{\lex} \qLBi{j}(x_{\B^{(j)}}-1)\}.
\] 
For each $s\in[\![1,q-1]\!]$, we write $G_s=\{i_{s,1},\ldots,i_{s,\alpha_s}\}$ where $i_{s,1}<\ldots<i_{s,\alpha_s}$ and we use the convention that $i_{s,\alpha_s+1}=i_{s+1,1}$ for $s\le q-2$ and $i_{q-1,\alpha_{q-1}+1}=j$. Moreover, we let $g\in\N_{\ge 1}$ be such that for all $i,i'\in \Int$ such that $\qLBi{i}(x_{\B^{(i)}}-1)\ne \qLBi{i'}(x_{\B^{(i')}}-1)$, the length-$g$ prefixes of $\qLBi{i}(x_{\B^{(i)}}-1)$ and $\qLBi{i'}(x_{\B^{(i')}}-1)$ are distinct. Then, for $s\in[\![1,q-1]\!] $, we define $C_s$ to be the least $c\in\N_{\ge 1}$ such that $\ell^{(i_s)}_{g-1+c}<\ceil{\beta_{i_s+g-1+c}}-1$. Finally, let $N\in\N_{\ge 1}$ be such that $pN\ge \max\{g,C_1,\ldots,C_{q-1}\}$.

For all $m\in\N$, consider 
\[
	w^{(m)}
	=\left(
	\prod_{s=1}^{q-1} 
	\prod_{k=1}^{\alpha_s}
	\ell_0^{(i_s)}\cdots \ell_{g-1}^{(i_s)} 
	(\lceil \beta_{i_{s,k}+g}\rceil-1)\cdots (\lceil \beta_{i_{s,k+1}+p(2N+1)-1}\rceil-1)	
	\right) 
	\ell_0^{(j)}\cdots \ell_{m-1}^{(j)}.
\] 
Now, let $m,n\in\N$ be distinct. Since $\qLBi{j}(x_{\B^{(j)}}-1)$ is not ultimately periodic, $\sigma^m \big(\qLBi{j}(x_{\B^{(j)}}-1)\big)\ne \sigma^n\big(\qLBi{j}(x_{\B^{(j)}}-1)\big)$. Thus, there exists $k\in\N_{\ge 1}$ such that $\ell_m^{(j)}\cdots \ell_{m+k-2}^{(j)}=\ell_n^{(j)}\cdots \ell_{n+k-2}^{(j)}$ and $\ell_{m+k-1}^{(j)}\ne \ell_{n+k-1}^{(j)}$. Without loss of generality, we suppose that $\ell_{m+k-1}^{(j)}<\ell_{n+k-1}^{(j)}$. Let $z=\ell_m^{(j)}\cdots \ell_{m+k-1}^{(j)}$. 
Similarly to proof of~\cite[Theorem 48]{CharlierCisternino2021}, it can be shown that $w^{(m)}z \in \Fac(\Sigma'_{\B})\cap \Pref(D'_{\B^{(i_{1,1})}})$ and $w^{(n)}z\notin \Fac(\Sigma'_{\B})$.
\end{proof}

\begin{remark}
In the proof of the necessary condition of Theorem~\ref{Thm : SoficLazy}, the parameters $\{r,i_1,\ldots,i_r,q,G_1,\ldots,G_q,\ldots\}$ may not coincide with the ones in the necessary condition of~\cite[Theorem 48]{CharlierCisternino2021}.
In fact, it may happen for example that there exist $i,j\in \Int$, such that
$\qDBi{i}(1)>_{\lex} \qDBi{j}(1)$ whereas $\qLBi{i}(x_{\B^{(i)}}-1)\le_{\lex}\qLBi{j}(x_{\B^{(j)}}-1)$. For instance, this is illustrated by Examples~\ref{Ex : QuasiGreedyDiffQUasiLazyEqualIntegers} and~\ref{Ex : AlternateSoficAutomaton}.
\end{remark}

\section{Acknowledgment}
The author thanks Jean-Pierre Schneiders for suggesting the way to approximate the value of $x_{\B}$ in Example~\ref{Ex : XBTM}.

The author is supported by the FNRS Research Fellow grant 1.A.564.19F.

\bibliographystyle{abbrv}
\bibliography{TheseBibliography}

\end{document}